\newcommand{\normal}{\color{black}}
\newcommand{\ax}{\mathbb{R}^+}
\newcommand{\rd}{{\mathbb{R}^d}}
\newcommand{\rone}{\mathbb{R}}
\renewcommand{\Re}{\rone}
\renewcommand{\star}{\circledast}
\newcommand{\E}{\mathds{E}}
\renewcommand{\P}{\mathds{P}}
\newcommand{\Beta}{\mathrm{B}}
\newcommand{\prt}{\partial}
\renewcommand {\epsilon}{\varepsilon}
\newcommand{\DD}{\mathbb{D}}
\newcommand{\eps}{\varepsilon}
\newcommand{\Ee}{\mathds E}
\theoremstyle{plain}
\newtheorem{thm}{Theorem}[section]
\newtheorem{prop}{Proposition}[section]
\newtheorem{lem}{Lemma}[section]
\theoremstyle{definition}
\newtheorem{dfn}{Definition}[section]
\theoremstyle{remark}
\newtheorem{rem}{Remark}[section]
\DeclareMathSymbol{\ophi}{\mathalpha}{letters}{"1E}
\renewcommand{\phi}{\varphi}
\newcommand{\be}{\begin{equation}}
\newcommand{\ee}{\end{equation}}
\newcommand{\ben}{\begin{equation*}}
\newcommand{\een}{\end{equation*}}
\newcommand{\ba}{\begin{aligned}}
\newcommand{\ea}{\end{aligned}}
\newfont{\cyrfnt}{wncyr10}
\def\J3{\cyrfnt{\rm \u{\cyrfnt I}}}
\def\j3{\cyrfnt{\rm \u{\cyrfnt i}}}
\definecolor{DarkGreen}{rgb}{0.1,0.7,0.3}   %define a custom color
\definecolor{DarkGreen}{rgb}{0.1,0.7,0.3}   %define a custom color
\numberwithin{equation}{section}
\begin{document}

\title{ Parametrix construction of the transition probability density of the solution to an SDE driven by $\alpha$-stable noise}%\footnote{\vica
%\textbf{thanks to grants!}\normal }}

\author{%
    \textsc{Victoria Knopova}%
    \thanks{  V.M.\ Glushkov Institute of Cybernetics,
            NAS of Ukraine,
            40, Acad.\ Glushkov Ave.,
            03187, Kiev, Ukraine,
            \texttt{vic\underline{ }knopova@gmx.de}}
    \textrm{\ \ and\ \ }
    \stepcounter{footnote}\stepcounter{footnote}\stepcounter{footnote}
    \stepcounter{footnote}\stepcounter{footnote}%
    \textsc{Alexei Kulik}%
    \thanks{Institute of Mathematics, NAS of Ukraine, 3, Tereshchenkivska str., 01601  Kiev, Ukraine,
    \texttt{kulik@imath.kiev.ua}}
    }

\date{}

\maketitle

\begin{abstract}
    \noindent

    Let $L:= -a(x) (-\Delta)^{\alpha/2}+ (b(x), \nabla)$, where  $\alpha\in (0,2)$, and  $a:\rd\to (0,\infty)$,  $b: \rd\to \rd$. Under certain regularity assumptions on the coefficients $a$ and $b$, we  associate with  the $C_\infty(\rd)$-closure of $(L, C_\infty^2(\rd))$  a Feller Markov process  $X$, which possesses a  transition probability density $p_t(x,y)$.
     To construct this transition probability density and to obtain the two-sided  estimates on it, we develop a new version of the parametrix method,  which even allows  us to handle the case  $0<\alpha\leq 1$ and $b\neq 0$, i.e. when the gradient part of the generator is not dominated by the jump part.

    \medskip\noindent
    \emph{Keywords:} Pseudo-differential operator, generator of a Markov process, transition probability density,  martingale problem, SDE,  Levi's parametrix  method.

    \medskip\noindent
    \emph{MSC 2010:} Primary: 60J35. Secondary: 60J75, 35S05, 35S10, 47G30.
\end{abstract}

\section{Introduction}\label{s12}

 Let $Z^{(\alpha)}$, $\alpha\in (0,2)$,  be a symmetric $\alpha$-stable process in $\Re^d$;  that is, a L\'evy process  with the characteristic function
$$
\E e^{i(\xi, Z_t^{(\alpha)})}=e^{-t|\xi|^\alpha}, \quad \xi\in \Re^d.
$$
 It is well known that the generator $L^{(\alpha)}$ of the semigroup
 $(P_t^{(\alpha)})_{t\geq 0}$, where
 $$
 P_t^{(\alpha)} f(x)= \Ee^x f(Z^{(\alpha)}_t),
  $$
 admits on  $C_\infty^2(\rd)$ the representation
\begin{equation}\label{pv}
L^{(\alpha)}f(x)= \hbox{P.V.} \int_{\Re^d}\Big(f(x+u)-f(x)\Big)\frac{c_\alpha}{|u|^{d+\alpha}}du.
\end{equation}
Here and below we denote by   $C_\infty^k(\rd)$, $k\geq 0$,   the space of $k$ times  continuously differentiable functions, vanishing at infinity together with their  derivatives.
The operator $L^{(\alpha)}$ is also called  a  \emph{fractional Laplacian}, and is denoted by $-(-\Delta)^{\alpha/2}$.

Consider the following ``perturbation'' of the operator $L^{(\alpha)}$:
\begin{equation}\label{symbol}
Lf(x)=a(x) L^{(\alpha)}f(x)+\Big(b(x), \nabla f(x)\Big), \quad f\in C_\infty^2(\Re^d),
\end{equation}
where $a(\cdot)>0$, $b(\cdot)\in \rd$.
When the coefficients $a(\cdot)$ and $b(\cdot)$ do not depend on $x$, the operator $L$ is just the  restriction to $C_\infty^2(\Re^d)$ of the  generator of the semigroup $\{T_t,\, t\geq 0\}$, which corresponds to the L\'evy process $Z^{(\alpha)}$ re-scaled by $a$ and with drift $b$. % \red I propose to omit the following \vica by  the relation $T_tf(x)=\Ee_x f(Z^{(\alpha)}_t)$, where $f$ is bounded and measurable.

The general case is much more complicated, and the purpose of this paper is to show that under suitable assumptions on the coefficients and the parameter $\alpha$, the $C_\infty$-closure of the  operator $(L,C_\infty^2(\rd))$ is the generator of  a semigroup $\{P_t, t\geq 0\}$, which corresponds to a strong Markov process.

Our approach is  analytic, and mainly relies on  the \emph{parametrix construction} of  the ``candidate''  $p_t(x,y)$ for the  transition probability density of the required Markov process. We develop a new version of the parametrix method, which substantially depends on  the relation between the regularity of the drift coefficient $b(x)$ and the parameter $\alpha$ and, in particular,
allows us to handle the case  $0<\alpha\leq 1$ and $b\neq 0$, i.e. the one where the gradient part of the generator is not dominated by the jump part. To associate the constructed kernel $p_t(x,y)$ with a  Markov process in a unique way, we develop a new method, which we believe may be useful in other settings as well. This method relies on the fact that $L$ possesses the positive maximum principle, and exploits a new notion of an \emph{approximate fundamental solution}. We refer the reader to a detailed discussion  in Section~\ref{over}, where we also give an overview of available results.

We also consider the probabilistic counterpart to the problem described above. Namely, we consider an SDE  driven by $Z^{(\alpha)}$
\be\label{SDE}
dX_t=b(X_t)\, dt+\sigma(X_{t-})\, dZ^{(\alpha)}_t;
\ee
here and below we denote $\sigma(x)=a^{1/\alpha}(x)$. Using the parametrix construction and the fact that the closure of $(L,C_\infty^2(\rd))$ is the generator of a Markov process, we show that a  weak solution to \eqref{SDE} is unique and actually coincides with this Markov process. This fact also ensures that the martingale problem  for $(L,C_\infty^2(\rd))$ is well posed.
Finally,  we provide lower and upper bounds for the   transition probability density $p_t(x,y)$.
\begin{comment}
The method we use is based on the \emph{parametrix} construction, which is a classical tool for constructing and estimating the fundamental solutions in the context of  the Cauchy problem for  parabolic second order  PDE's, see \cite{Fr64}. There are various extensions of the classical parametrix construction, which allow to treat the Cauchy problem for pseudo-differential operators as well. In particular, there are many publications devoted to the analysis of  properties of a Markov process, such that the   generator of the  respective semigroup is  of the form (\ref{symbol}); we discus them in  Section \ref{over} below. Here we only  mention that in the  results available sofar and involving the parametrix scheme,  it is required that either $b\equiv 0$ or $\alpha>1$, which heuristically corresponds  to the assumption  that the ``gradient part'' $(b(x), \nabla)$ is dominated   by the ``jump part'' $a(x)L^{(\alpha)}$, and therefore  the ``jump part'' should be  the ``leading term'' in $L$. We emphasize that our parametrix construction allows to treat  generators of the form (\ref{symbol}) with $\alpha\leq 1$ and non-trivial  ``gradient term'' $(b(x), \nabla)$.

\end{comment}

The paper is organized as follows. In Section~\ref{s20} we  formulate the main results, give the outline of the proofs and an overview of already existing results, comparing them with ours. Section~\ref{s2} is devoted to the parametrix method for construction of   the function $p_t(x,y)$, which is the candidate for being the fundamental solution to the Cauchy problem for $\prt_t -L$.  Section~\ref{s4} is devoted to the relation between the operators  $P_t$ and  $L$,  in particular, we prove that the family of operators $\{P_t,\, t\geq 0\}$ forms a strongly continuous contraction semigroup on $C_\infty(\rd)$. Then we  prove  that the extension $(A,D(A))$ of $(L, C_\infty^2(\rd))$ is in fact the generator of the semigroup $\{P_t$, $t\geq 0\}$, and, moreover, that $p_t(\cdot ,y)\in D(A)$, and is the fundamental solution to the Cauchy problem for $\prt_t -A$.   In Section~\ref{sweak} we prove that the constructed process $X$ is the weak solution to \eqref{SDE},  and that the martingale problem $(L, C_\infty^2(\rd))$  is well-posed.   In Section~\ref{s3} we give the estimates on the time derivative $\prt_t p_t(x,y)$ and related auxiliary function  appearing in the parametrix construction.   In Section~\ref{s7} we give the proofs of lower and upper bounds on $p_t(x,y)$. Appendices  A and B  contain some auxiliary results, used in the proofs.

\section{The main results: preliminaries, formulation, and discussion}\label{s20}

\subsection{Notation and preliminaries}\label{sA}

 Through the paper we use the following notation.

 By   $g^{(\alpha)}(x)$  we denote  the distribution density of the symmetric $\alpha$-stable variable $Z_1^{(\alpha)}$.
 Note that $L^{(\alpha)}$ is a homogeneous operator of the order $\alpha$ and the process $Z^{(\alpha)}$ is self-similar: for any $c>0$, the process
 $$
 c^{-1/\alpha}Z_{ct}^{(\alpha)},\quad t\geq  0,
 $$
 has the same law as $Z^{(\alpha)}$. Consequently, the transition probability density of $Z^{(\alpha)}$ equals $t^{-d/\alpha} g^{(\alpha)}(t^{-1/\alpha} (y-x))$.
 By $C_\infty(\rd)$ (respectively, $C_b(\rd)$)  we denote the class of continuous functions vanishing at infinity (respectively, bounded); clearly, $C_\infty(\rd)$ is a Banach space with respect to the  $\sup$-norm $\|\cdot\|_\infty$.  By $C_\infty^k(\rd)$ (respectively, $C_b^k(\rd)$), $k\geq 1$,  we denote  the  class of $k$-times continuously differentiable functions vanishing  at infinity  (respectively, bounded) together with their derivatives.

We use the following notation for space and time-space convolutions of functions:
$$
(f\ast g)_t(x,y):=\int_{\Re^d}f_{t}(x,z)g_{t}(z,y)\, dz,
$$
$$
(f\star g)_t(x,y):=\int_0^t\int_{\Re^d}f_{t-s}(x,z)g_{s}(z,y)\, dzds.
$$

As usual,    $a\wedge b :=\min(a,b)$, $a \vee b:=\max(a,b)$. By $|\cdot|$ we denote both the modulus of a real number and the Euclidean norm of a vector.  By  $c$  and $C$ we denote positive constants, the value of which may vary from place to place.   Relation $f\asymp g$ means that
$$
cg\leq f\leq C g.
$$
By $\Gamma(\cdot), \Beta(\cdot, \cdot)$ we denote the Euler Gamma- and Beta-functions. Finally, we write  $L_x $ to emphasize that the operator $L$ acts on a function $f(x,y)$ with respect to the variable $x$,   i.e.,  $L_xf(x,y)= Lf(\cdot,y)(x)$.

Recall that a real-valued  function $p_t(x,y)$ is said to be the \emph{fundamental solution} to the Cauchy problem for the operator
\be\label{L_ful}
\prt_t-L,
\ee
if for  $t>0$ it is differentiable in $t$, belongs to the domain of $L$ as a function of $x$, and satisfies
\be\label{L_fund}
\Big(\prt_t-L_x\Big)p_t(x,y)=0, \quad t>0, \quad x,y\in \Re^d,
\ee
\be\label{L_delta}p_t(x, \cdot) \Rightarrow \delta_x, \quad t\to 0+, \quad x\in \Re^d;
\ee
see  \cite[Def.~2.7.12]{Ja02} in the case of a general pseudo-differential operator, which is the generalization of the corresponding definition (cf. \cite{Fr64}, for example) in the parabolic/elliptic setting.

In order to simplify the further exposition,  we briefly outline the  \emph{parametrix method}, which we use to construct $p_t(x,y)$.  Consider \emph{some} approximation $p_t^0(x,y)$ to this function, and denote by $r_t(x,y)$ the residue term  with respect to  this approximation:
\be\label{sol}
p_t(x,y)=p_t^0(x,y)+r_t(x,y).
\ee
Put
\be\label{Phi}
\Phi_t(x,y):=-\Big(\prt_t-L_x\Big)p_t^0(x,y),\quad t>0, \quad x,y\in\Re^d.
\ee
Recall that $p_t(x,y)$ is supposed   to be the fundamental solution for the operator  (\ref{L_ful}), hence
\begin{equation}\label{L_fund2}
\Big(\prt_t-L_x\Big)r_t(x,y)=\Phi_t(x,y).
\end{equation}
 Recall that if $p_t(x,y)$ is the fundamental solution to  \eqref{L_fund}, then one expects the solution  to  equation  \eqref{L_fund2} to be of the form
$$
r_t(x,y)=(p\star \Phi)_t(x,y).
$$
Substituting now in the right-hand side of the above equation  representation \eqref{sol} for $p_t(x,y)$, we get the following equation for $r_t(x,y)$:
$$
r_t(x,y)=(p^0\star \Phi)_t(x,y)+(r\star \Phi)_t(x,y).
$$
The formal solution to this equation is given by the convolution
\be\label{r}
r_t(x,y)=(p^0\star \Psi)_t(x,y),
\ee
where $\Psi$ is the sum of $\star$-convolutions of $\Phi$:
\be\label{Psi}
\Psi_t(x,y):=\sum_{k\geq 1}\Phi^{\star k}_t(x,y).
\ee
If  the series (\ref{Psi})  converges and the convolution  (\ref{r}) is well defined, we obtain   the required function $p_t(x,y)$ in the form
\be\label{sol_1}
p_t(x,y)=p_t^0(x,y)+\sum_{k\geq 1}(p^0\star\Phi^{\star k})_t(x,y).
\ee

Clearly, the above argument is yet purely formal; in order  to make it rigorous, we need to prove that  the parametrix construction is feasible, i.e. that the sum in the right hand side  of (\ref{sol_1}) is well defined, and then to associate  $p_t(x,y)$  with the initial operator $L$. We note that the key point to make the entire approach successful is the proper choice of the zero order approximation $p^0_t(x,y)$; see Section~\ref{over} for more detailed discussion of this point.

\subsection{The main results}\label{main}

Our standing assumption on the intensity coefficient $a(x)$ is that it is \emph{strictly positive, } \emph{bounded}  from above and below  and  \emph{H\"older continuous}  with some index $0<\eta\leq 1$, i.e.  there exist $0<c<C$  such that
\begin{equation}\label{a_bdd_Hol}
 c\leq a(x)\leq C, \quad |a(x)-a(y)|\leq C|x-y|^\eta, \quad x,y\in \Re^d.
\end{equation}
We also assume that the drift coefficient $b(x)$  satisfies the assumption below:%\footnote{ya ne hochy nazyvat hoelderovost s $\gamma=0$ "$C_b$",  nam skazhut, chto pri $\gamma=0$ poluchitsya ogranichennost, i budut pravy!!}

\begin{equation}\label{b-bdd-Hol}
b(\cdot)\in C_b(\rd), \quad |b(x)-b(y)|\leq C |x-y|^\gamma, \quad x,y\in \Re^d,
\end{equation}
where $\gamma\in [0,1]$. We consider three cases; in each of them the H\"older index $\gamma$ for the drift coefficient is related to the  index $\alpha$.

\medskip

{\it Case} \textbf{A}.  $\alpha\in (1,2)$, $\gamma=0$.

{\it Case} \textbf{B}. $\alpha\in
((1+\gamma)^{-1}, 2)$,  $0<\gamma<1$.

{\it Case} \textbf{C}. $\alpha\in (0,2)$, $\gamma=1$.

\medskip

%; that is,
%$$
%\red |b(x)-b(y)|\leq c|x-y|, \quad x, y\in \Re^d.\normal
%$$

%\begin{rem} Apparently, in the case of  Assumption \textbf{B} it is required that both $a(x)$ and $b(x)$ are H\"older continuous,  and then  $\gamma$ denotes the least of their H\"older indices.
%\end{rem}

\begin{rem}\label{r22}  a) Observe that  assumptions in the cases  \textbf{A} -- \textbf{C}  overlap, but none of the assumptions is implied by the other one: if we denote by  $\alpha_A,\alpha_B, \alpha_C$  the infima of $\alpha$ allowed in each of these cases, then we have   $\alpha_A=1$, $\alpha_B=(1+\gamma)^{-1}$,  and $\alpha_C=0$.    Note that  the regularity assumption on drift coefficient is weakened  from case \textbf{A} to case \textbf{C}, but on the other hand    we have $\alpha_C\leq\alpha_B \leq \alpha_A$.  \normal Heuristically, this means that  by increasing the regularity of $b$ we can relax the assumption on $\alpha$, and vice versa.
Note also that if we let  $\gamma\to 1$ in the ``intermediate case'' \textbf{B}, we get $\alpha_B\to 1/2\not=\alpha_C$, which means that case \textbf{C} cannot be obtained from \textbf{B} by such a  limit procedure.

b) As we will see below (cf. Proposition~\ref{p1}), the constructed function  $p_t(x,y)$ is uniquely  associated with the operator in  \eqref{symbol}, which implies  that if the coefficients $a(x)$ and $b(x)$ are such that  some of the cases \textbf{A} -- \textbf{C} overlap,  then different  choices of the  zero-order approximation provided by \eqref{abc} below  give the same outcome  $p_t(x,y)$.  However, the upper and lower bounds on $p_t(x,y)$ depend on the choice of $p^0_t(x,y)$. \normal
\end{rem}

When  $b$ is Lipschitz continuous, the Cauchy problem for the ordinary differential equation (ODE)
\begin{equation}\label{flow1}
    d\chi_t=b(\chi_t)\, dt, \quad \chi_0=x, \normal
\end{equation}
admits  the \emph{flow of solutions} $\{\chi_t, t\in \Re\}$, cf. \cite[Th.~2.1]{CL55}.    Denote by $\{\theta_t=\chi_t^{-1}, t\in \Re\}$ the \emph{inverse flow}, i.e. $\big(\theta_t\circ \chi_t \big) (x)= \big(\chi_t\circ \theta_t \big) (x)=x$,  which can be also defined just as the flow of solutions to the  Cauchy problem for the ODE
\begin{equation}\label{flow2}
   d\theta_t=-b(\theta_t)\, dt, \quad \theta_0=x.\normal
\end{equation}

In all the results formulated in the sequel, we assume that \eqref{a_bdd_Hol}, \eqref{b-bdd-Hol} hold true, and one of three assumptions which relate $\alpha$ and $\gamma$ (cases \textbf{A} -- \textbf{C}) is satisfied.
In our first main result we specify  in each of the cases \textbf{A} -- \textbf{C}  the choice of the zero order approximation $p^0_t(x,y)$ in the parametrix construction outlined above, and prove that this construction is feasible.

\begin{thm}\label{t1} Let
\begin{equation}\label{p0}
p_t^0(x,y):= \frac{1}{t^{d/\alpha}a^{d/\alpha}(y)}g^{(\alpha)}\left({\omega(t,y)-x\over t^{1/\alpha}a^{1/\alpha}(y)}\right),
\end{equation}
where
\begin{equation}\label{abc}
\omega(t,y):=
\begin{cases}
y, & \text{ in case \textbf{A};}\\
y-tb(y),&  \text{ in case \textbf{B};}\\
\theta_t(y), &\text{ in case \textbf{C}}.
\end{cases}
\end{equation}

Then  the following statements hold true.

 \begin{itemize}
 \item[1.]  For $t>0$, $x,y\in \rd$, the function $p_t(x,y)$ given by  \eqref{sol_1} is  well defined, in the sense that the integrals $\Phi^{\star k}$ and $p^0 \star \Phi^{\star k}$ exist, and for every $T>0$ the series involved in \eqref{sol_1} converges absolutely on $(0,T]\times \rd\times \rd$,  i.e.
     $$
     \sum_{k\geq 1}\big|(p^0\star\Phi^{\star k})_t(x,y)\big|<\infty.
     $$

\item[2.]  The function $p_t(x,y)$ is continuous on $(0, \infty)\times \Re^d\times\Re^d$.

\item[3.] For any  $\kappa\in (0,\alpha\wedge \eta)$ and $T>0$, the following estimate for  $r_t(x,y)$  holds true:
\begin{equation}\label{r_bound}
|r_t(x,y)|\leq Cp_t^0(x,y) V_t\Big(\omega(t,y)-x\Big),\quad t\in (0,T], \, x,y\in \rd,
\end{equation}
where
\begin{equation}\label{r_bound2}
V_t(z)=
\begin{cases}
t^{\kappa/\alpha}+t^\delta,& \text{if}\quad |z|\leq t^{1/\alpha}\\
|z|^{\kappa} +  t^\delta,&\text{if}\quad t^{1/\alpha}\leq |z|\leq 1,\\
1+t^\delta, &\text{if}\quad |z|\geq 1.
\end{cases}
\end{equation}
and
\begin{equation*}
\delta:=
\begin{cases}
\kappa, & \text{ in cases \textbf{A} and \textbf{C}}, \\
\left(1-{1\over \alpha}+\gamma\right)\wedge \left(1-{1\over \alpha}+{\gamma\over \alpha}\right)\wedge \kappa, & \text{ in case \textbf{B}.}
\end{cases}
\end{equation*}

\end{itemize}
\end{thm}
\begin{rem} We have
\be\label{H_0}
p_t^{(0)}(x,y) \asymp \frac{1}{t^{d/\alpha}} \frac{1}{(1+t^{-1/\alpha}|\omega(t,y)-x|)^{d+\alpha}}
\ee
(see \eqref{g_a} below). Hence  \eqref{r_bound} is equivalent to the following:
\begin{equation}\label{r_bound0}
|r_t(x,y)|\leq R_t\Big(\omega(t,y)-x\Big),\quad t\in (0,T], \, x,y\in \rd,
\end{equation}
where
\begin{equation}\label{r_bound20}
R_t(z)=
\begin{cases}
 ct^{-d/\alpha} ( t^{\kappa/\alpha}+ t^\delta), &  \text{if}\quad  |z|\leq t^{1/\alpha},\\
c\big(|z|^\kappa+t^\delta\big) \dfrac{t}{|z|^{d+\alpha}}, & \text{if}\quad t^{1/\alpha}<|z|\leq 1, \\
\dfrac{ct}{|z|^{d+\alpha}}, &  \text{if}\quad  |z|> 1.
\end{cases}
\end{equation}
\end{rem}

The proof of Theorem \ref{t1} and all the other   results in this section  formulated below are postponed to subsequent sections.  All the subsequent results use the same notation.

%In what follows,  we use the notation $L=L(x, D)$, where  $L(x,D)$ is  the initial operator given by (\ref{symbol}).
Next, we relate the  function $p_t(x,y)$ to the initial operator $L$. To make the structure of the proofs the most transparent, we do this in two steps.  First, we prove that $p_t(x,y)$ is a  transition probability density of \emph{some} Markov process, and that the $C_\infty(\Re^d)$-generator of the respective semigroup  is an extension of $(L, C^2_\infty(\Re^d))$.

\begin{thm}\label{t2}  Identity
 \be\label{semi}
P_tf(x)=\int_{\Re^d}p_t(x,y)f(y)\, dy, \quad f\in C_\infty(\Re^d),
\ee
 defines a strongly continuous conservative  contraction  semigroup of non-negative operators  on $C_\infty(\Re^d)$, which in turn defines a (strong) Feller Markov process $X$. Every function $f\in C^2_\infty(\Re^d)$ belongs to the domain $D(A)$ of the generator $A$ of this semigroup, and
 $$
 Af(x)=Lf(x)=a(x) L^{(\alpha)}f(x)+\Big(b(x), \nabla f(x)\Big),\quad  f\in C^2_\infty(\Re^d);
 $$
 that is, $(A,D(A))$ is an extension of  $(L, C^2_\infty(\Re^d))$.
\end{thm}

In the next theorem  we prove that the semigroup (\ref{semi}) with $p_t(x,y)$ defined by (\ref{sol_1}) is in fact the  \emph{unique} Feller semigroup  associated with the operator (\ref{symbol}). %In addition, we show that the function $p_t(x,y)$, which was \emph{formally} constructed under the assumption that it is a fundamental solution, indeed  can be interpreted as a fundamental solution, but for the operator $\partial_t - A$ instead of the original one  $\partial_t - L$.
\begin{thm}\label{t3}
The generator $(A,D(A))$ is the closure of $(L, C^2_\infty(\Re^d))$  in $C_\infty(\rd)$.
 \end{thm}

%The method we use to prove Theorem \ref{t3}  also suites well for the proof of the following proposition,
The proposition below clarifies  the relation between the function $p_t(x,y)$ and the notion of the ``fundamental solution'', on which the parametrix construction of $p_t(x,y)$ was based. We formulate and prove this proposition  under the additional assumption  that in the case \textbf{C} the function $b(\cdot)$ is continuously differentiable (no additional assumptions in the cases \textbf{A},\textbf{B} are required).

\begin{prop}\label{p1} The  real-valued function $p_t(x,y)$  is a fundamental solution to the Cauchy problem  for the operator $\partial_t - A$; that is, for  $t>0$ it is differentiable in  $t$, belongs to the domain of $A$ as a function of $x$, and satisfies (\ref{L_delta}) and  the analogue of (\ref{L_fund}) with $A_x$ instead of $L_x$.

\end{prop}

Our next step is to relate the process $X$ constructed in Theorem~\ref{t2} to the weak solution to the SDE driven by $\alpha$-stable noise or, in a closely related terminology, to the solution of the martingale problem for $(L, C^2_\infty(\Re^d))$. Namely, the semigroup $\{P_t\}$  corresponding to the  process $X$ possesses the Feller property, hence the process $X$ has a c\'adl\'ag modification, see  \cite[Chap.~4, Th.~2.7]{EK86}.  Denote by $\P_x$ the law of the Markov process $X$ with $X_0=x$ in the Skorokhod space $\DD([0, \infty),\Re^d)$ of c\'adl\'ag functions $[0, \infty)\to \Re^d.$ Recall that a measure $\P$ on $\DD([0, \infty),\Re^d)$ is called a solution to the \emph{martingale problem}  $(L, D(L))$, if for every $f\in D(L)$ the process
$$
f(X_t)-\int_0^t Lf(X_s)\,ds, \quad t\geq 0
$$
is a martingale with respect to $\P$, and the martingale  problem for   $(L, D(L))$ is called \emph{well posed}, if for every $x\in \Re^d$ there exists the unique solution to $(L, D(L))$ with $\P(X_0=x)=1$.

\begin{thm}\label{tweak} For    every     $x\in \Re^d$ the SDE (\ref{SDE}) with the initial condition
$X_0=x$ has a unique weak solution, and the law of this solution in $\DD([0, \infty),\Re^d)$ equals $\P_x$.

In addition, the martingale problem $(L, C_\infty^2(\rd))$ is well posed, and $\P_x$ is  its unique solution with the initial condition $X_0=x$.
\end{thm}
\begin{rem}\label{strong}
It is well known  that, when both coefficients in equation of \eqref{SDE} are Lipschitz continuous,  there exists a unique strong solution to \eqref{SDE} (see, for example, \cite[Th.~IV.9.1]{IW81}, or \cite[Th.~IV.3]{GS82}). In our framework, the coefficient $a(x)$ is assumed to be only H\"older continuous. Up to our knowledge, there are no results on the existence and uniqueness of the strong solution under the assumption of the H\"older continuity of coefficients in equations of type \eqref{SDE}, see also  \cite{BBC04} for the negative example.  See, however, \cite{Pr14},  \cite{CSZ15}, where under the assumption that $a=1$ and $b$ is H\"older continuous the existence and uniqueness of the strong solution is shown.
\end{rem}
The last two theorems  contain  explicit estimates, respectively, for  $p_t(x,y)$ and for its derivative with respect to  the time variable.

\begin{comment} Since in all three cases (a) -- (c) of Theorem \ref{t1} the ``main term'' $p_t^0(x,y)$ in (\ref{sol}) is given explicitly, it is practical to estimate the target density $p_t(x,y)$ in the terms of the   bounds for the residue in (\ref{sol}). In the theorem below we will see that such bounds  are given in the following form:
\be\label{r_bound}
|r_t(x,y)|\leq Ct^{\delta}H_t(x,y), \quad t\in (0, t_0], \quad x,y\in \Re^d.
\ee
The kernels $H_t(x,y)$ will be chosen in such a way that for every $T>0$
\be\label{H_bound}
C_{1,T}\leq \int_{\Re^d} H_t(x,y)\, dy\leq C_{2,T}, \quad t\in (0, T].
\ee

\end{comment}

\begin{thm}\label{t4} We have
\begin{equation}\label{uplo}
p_t(x,y)\asymp
 \frac{1}{t^{d/\alpha}}g^{(\alpha)}\left({\omega(t,y)-x\over t^{1/\alpha}}\right), \quad t\in (0,T], \, x,y\in \rd.
\end{equation}
\end{thm}

Finally, in the theorem below we show the  continuity of the time derivative $\prt_t p_t(x,y)$, and provide the upper estimate for it. Note that these properties of $\prt_t p_t(x,y)$ are involved in the proofs of Theorem~\ref{t3} and Proposition~\ref{p1}; see assertions \eqref{dPte}, \eqref{dpte} below. These properties also have an independent interest, e.g. in the context of estimation of the accuracy of discrete approximation of occupation time functionals; see \cite{GK14}.
\begin{thm}\label{t5}

 \begin{enumerate}
   \item There exists a set $\Upsilon\subset (0, \infty)\times \Re^d$ of zero Lebesgue measure such that the function $p_t(x,y)$ defined by (\ref{sol}) -- (\ref{Psi})  has a derivative
$$
\prt_tp_t(x,y), \quad x\in \Re^d,\quad (t,y) \notin \Upsilon,
$$
which for every fixed $(t,y) \notin\Upsilon$ is continuous in  $x$. Moreover, in the cases \textbf{A} and  \textbf{B} the set $\Upsilon$ is empty, and $\prt_tp_t(x,y)$ is continuous in $(t,x,y)$.
   \item The derivative $\prt_tp_t(x,y)$ possesses the bound
   $$
   |\prt_tp_t(x,y)|\leq C\Big(t^{-1}\vee t^{-1/\alpha}\Big) \frac{1}{t^{d/\alpha}}g^{(\alpha)}\left({\omega(t,y)-x\over t^{1/\alpha}}\right), \quad x\in \Re^d,\quad (t,y) \notin \Upsilon.
   $$

   \item For every $f\in C_\infty(\rd)$  the function
   $$
   (0,\infty)\ni t\mapsto P_tf\in C_\infty(\rd)
   $$
   is continuously differentiable, and
   $$
   (\prt_tP_tf)(x)=\int_\rd \prt_tp_t(x,y) f(y)dy.
   $$
    \end{enumerate}

\end{thm}

\begin{rem} For $\lambda >  0$ denote
\begin{equation}\label{Gbet}
G^{(\lambda)}(x) := \big( |x|\vee 1)^{-d-\lambda}, \quad x\in\rd.
\end{equation}
   Since     $g^{(\alpha)}(x)\asymp G^{(\alpha)}(x)$ (see Proposition~\ref{A2} below), one can replace in the above bounds $g^{(\alpha)}$ by $G^{(\alpha)}$, which gives  more explicit estimates. We   used $g^{(\alpha)}$  in the estimates for $p_t(x,y)$ and $\partial_t p_t(x,y)$  in order to emphasize the impact of
 the original $\alpha$-stable process.
 \end{rem}

\subsection{Overview and discussion}\label{over}

    For the description and the background of the parametrix construction of the fundamental solution to a Cauchy problem for  parabolic second  order  PDE's, we refer to the monograph of Friedman \cite{Fr64}; see also the original papers by E.Levi \cite{Le07} and W. Feller \cite{Fe36}. This construction was  extended  in \cite{Dr77},  \cite{ED81}, \cite{Ko89},  \cite{Ko00}, to equations with pseudo-differential operators, see also the reference list and an  extensive overview in the monograph \cite{EIK04}. In \cite{Dr77},  \cite{ED81}, \cite{Ko89}, the ``main term'' in the pseudo-differential operator is assumed to have the form $a(x)L^{(\alpha)}$ (in our notation) with $\alpha>1$. In \cite{Ko00} the operator of such    a   type is treated, and  although   the case   $\alpha\leq 1$ is allowed,  in this case the gradient term is not  involved in the equation.  The list of subsequent and related  publications is large, and we cannot discuss it here  in details. Let us only mention two recent preprints:   \cite{CZ13},  where two-sided estimates, more precise than those in \cite{Ko89} are obtained,  and \cite{BK14},  where the  probabilistic interpretation of the parametrix construction and its application to the Monte-Carlo  simulation is developed.

    In all the references listed above it is required that either  the stability index $\alpha$ satisfies   $\alpha>1$, or the gradient term is not  involved in the equation. This is the common assumption in all the references available for us in this direction, with the one important exception given by the recent paper \cite{DF13}, see also \cite{FP10}, \cite{F13}. In  \cite{DF13}, for a L\'evy driven SDE with $\alpha$-stable like noise, the question of \emph{existence} of the distribution density is studied by a different method, based on thoroughly balanced  approximation of the initial SDE, Fourier transform based estimates, and discrete integration by parts. \normal Such an approach is applicable for  SDE's  with  the    parameter of the noise  $\alpha<1$ \normal  and  a  non-trivial  drift, but it does not give proper tools neither for obtaining explicit estimates for this density, nor even for proving  the existence and uniqueness of the solution to the initial SDE. Hence, the scope of our approach based on the parametrix construction, differs substantially from that  of  \cite{DF13}.

    Our version of  the parametrix construction contains a substantial novelty, which makes it possible  to handle the case  $\alpha\leq 1$ with non-trivial  gradient term. To explain this modified construction  in the most transparent  way, we took the ``jump component'' in a relatively simple form $a(x)Z^{(\alpha)}$. Clearly,  one can think about considering, for example,  $\alpha$-stable symbol  with state dependent spectral measure, see \cite{Ko00}. This, however, leads to additional cumbersome but inessential technicalities,  and we prefer not do this in the current paper.

 It was already mentioned that the parametrix construction described in Section~\ref{sA} heavily  relies on the choice of the ``zero-order''  approximation $p_t^0(x,y)$. In the case $\alpha>1$,  the first term $a(x) L^{(\alpha)}$  dominates the second term  $b(x) \nabla $ in the sense that the symbol $a(x) |\xi|^\alpha$ of the first term  grows as  $|\xi|\to \infty$ faster than the modulus of the symbol $ib(x)\xi$ of the second term, see \cite{Ja02} for  the detailed explanation. This allows  us to chose  in the case $\alpha>1$  the zero-order approximation $p^0$  in the ``classical''  way  (cf. \cite{Fr64},  \cite{Ko89}):  Take  the ``principal part'' $a(x) L^{(\alpha)}$ of the generator,  ``freeze'' the coefficient $a(x)$ at some point $x=z$,  then take the fundamental solution $q^z_t(x,y)$ to the operator $\partial_t- a(z)L^{(\alpha)}$ with this ``frozen principal symbol'', and finally put $p^0_t(x,y):= q^z_t(x,y)|_{z=y}$. However, this procedure is not successful in the case $\alpha\leq 1$, and the reason for this is already mentioned: in this case, the operator $a(x) L^{(\alpha)}$ no longer dominates the gradient term, and therefore it can not be treated as  the ``principal part'' of $L$. One can modify the choice of the zero order approximation in several ways.   One of the possible choices\normal \,  is to ``freeze'' the coefficients in the entire operator $L$ (which is now itself  considered as   the   ``principal part''),  and   to   take the fundamental solution $q^z_t(x,y)$ to the Cauchy problem for $\partial_t- a(z)L^{(\alpha)}-(b(z),\nabla)$. This is exactly what we do in  Case \textbf{B}. However, this procedure is   restricted     by the relation between the parameter of the H\"older continuity $\gamma$ and $\alpha$.  In  Case \textbf{C} the choice of the zero order approximation is no longer related   to   the fundamental solution to an  equation with frozen coefficients, but instead uses a carefully designed ``corrector'' $\omega(t,y)$, which   allows us\normal \,  to treat all  $\alpha\in (0,2)$.  The price of such an approach is the assumption of the Lipschiz continuity of the drift $b$.
\normal

The effect of the interplay between the value of $\alpha$ and the regularity properties required for $b$, which we have mentioned in  Remark \ref{r22}, was   observed first in \cite{Po94}, \cite{PP95}. It was shown therein that the parametrix construction is still feasible for  (possibly unbounded) $b\in L_{p}(\Re^d)$, $p>d/(\alpha-1)$, where $\alpha>1$. In \cite{BJ07} this effect was rediscovered in a stronger form:  it is required  that $b$ belongs to the Kato class $\mathbb{K}_{d,\alpha-1}$. In \cite{KS14} this result is extended even further, with $b$ being allowed to be a generalized function equal to the derivative of a measure from $\mathbb{K}_{d,\alpha-1}$.

    In general, there is a substantial gap between the problem of  constructing  a  ``candidate for being the fundamental solution'' (i.e., to prove  that relations (\ref{sol}) -- (\ref{Psi}) make sense), and the problem of relating the constructed  kernel $p_t(x,y)$ to a Markov process.   The first way how one can    possibly solve  this problem was proposed in  \cite{Ko89}. It extends   the approach   from  \cite{Fr64}, where it is shown in the parabolic setting
    that $p_t(x,y)$ is twice continuously differentiable in  $x$,  and satisfies (\ref{L_fund}) in  the classical sense. Note that the domain of  the operator $L$  is $C_b^2(\rd)$.   In the $\alpha$-stable case the natural upper bound $\prt_{xx}^2p_t^0(x,y)\leq Ct^{-2/\alpha}$ is strongly singular for small $t$, and therefore it is difficult to prove using the parametrix construction that $p_t(\cdot ,y)\in C_b^2(\rd)$; thus, one cannot check  straightforwardly  that  the expression \eqref{L_fund} makes sense.  Instead, in \cite{Ko89} the extended domain of $L^{(\alpha)}$ is  introduced  in  terms of ``hyper-singular integrals'', and it is proved that $p_t(x,y)$ satisfies (\ref{L_fund}) in the corresponding ``extended'' sense. Once (\ref{L_fund}) is proved, the required properties of the Markov process associated with $p_t(x,y)$ follow from the positive maximum principle in a rather standard way. Another way to   verify\normal \, the parametrix construction, proposed in \cite{Ko00}, is to guarantee the required smoothness of $p_t(x,y)$ by using the integration by parts procedure, but this approach seems to be only partially relevant; see Remark~\ref{r_Kol} below.

    Partially, one  can solve the  problem of relating the kernel $p_t(x,y)$ to a Markov process  by using some  approximation procedure (e.g. \cite{Po94}, \cite{PP95}), or by analysing the perturbation of the resolvent kernels (cf. \cite{BJ07}). However, the most difficult part  here is to relate \emph{uniquely} the initial symbol and  the Markov process associated with $p_t(x,y)$. This problem was  recently  solved in \cite{KS14}, in the framework of a singular gradient perturbation of an $\alpha$-stable generator, in  terms of the weak solution to the corresponding SDE. See also \cite{CW13}, where alternatively the martingale problem approach was used. The technique therein is closely related to those  introduced (in the diffusive setting) in \cite{BC03}, and apparently strongly relies on the structural assumption that the resolvent, which corresponds to $p_t(x,y)$, is a perturbation of the  resolvent of  an  $\alpha$-stable process.

    We propose a new method for solving this correspondence problem.   Our    method  is based  on the notion of the \emph{approximate fundamental solution} to the Cauchy problem for (\ref{L_ful}), see Section \ref{s4} and especially  the discussion at the beginning of Section \ref{s52}. Combined with a proper ``approximate'' version of the positive maximum principle, this notion gives a flexible tool both for proving  the semigroup properties of $p_t(x,y)$ (Theorem \ref{t2}), and for studying  more delicate uniqueness issues (Theorem \ref{t3}).
    We expect that this method  will be well applicable in other situations, where the parametrix construction is feasible; this is the subject of our ongoing research.

   Let us briefly discuss  another large group of results, focused on the construction of a \emph{semigroup} for a Markov process with a given symbol rather than on the transition probability density $p_t(x,y)$ for it.   An approach based on properties of the symbol of the operator and on the Hilbert space methods, is developed in the works of Jacob \cite{Ja94}, see also the monograph \cite{Ja96}. It allows to show the existence  of the closed extension  in $C_\infty(\rd)$ of a  given pseudo-differential operator, and that this extension is the generator of a Feller semigroup.  This approach was further developed   in      \cite{Ho98a}, \cite{Ho98b},  \cite{Bo05}, \cite{Bo08},   and relies on the symbolic calculus approach for the  parametrix  construction    (cf.  \cite{Ku81},  the original papers  \cite{Ts74}, \cite{Iw77}),  and see also   \cite{Ja01}--\cite{Ja05} for the detailed treatment).

Finally, we mention the  group of results devoted to the well-posedness of the martingale problem for an integro-differential operator of certain type. For different types of perturbations of an $\alpha$-stable generator, this problem was treated in   \cite{Ko84a}, \cite{Ko84b},  \cite{Ts70}, \cite{Ts73}, \cite{MP92a}--\cite{MP12a},     \cite{Ba88},
see also     \cite{Ho94}, \cite{Ho95}  for yet another approach  for rather wide class of operators.

\section{Proof of Theorem~\ref{t1} and continuity properties of $P_t$} \label{s2}

\subsection{Function $\Phi$: evaluation and an upper bound}\label{32}
Our first step in the proof of Theorem~\ref{t1} is to evaluate the kernel $\Phi$  and to give an upper bound for it.

For $\lambda\in [0,\alpha)$ we introduce a family of kernels
\begin{equation}\label{Qt21}
Q_t^{(\lambda)}(x,y):=\left( \Big|\frac{\omega(t,y)-x}{t^{1/\alpha}}\Big|^\lambda  \wedge t^{-\lambda/\alpha} \right)\frac{1}{t^{d/\alpha}}   G^{(\alpha)}\normal \left({\omega(t,y)-x\over t^{1/\alpha}}\right),
\end{equation}
  where the function $G^{(\alpha)}(x)$ in defined in \eqref{Gbet}, and $\omega(t,y)$ in defined in \eqref{abc}\normal.
We remark that since  $Q_t^{(\lambda)} (x,y)$ involves $\omega(t,y)$,  we have in fact  three different families  $Q_t^{(\lambda)} (x,y), \lambda\in [0,\alpha)$, which correspond to
 the cases \textbf{A} -- \textbf{C}.

\begin{lem}\label{lPhi1}
Let $\kappa\in (0,\alpha\wedge \eta), T>0$.  Then
\begin{equation}\label{Phi1}
|\Phi_t(x,y)|\leq C\Big( t^{-1+\kappa/\alpha}Q_t^{(\kappa)} (x,y)+ t^{-1+\delta} Q_t^{(0)}(x,y)\Big), \quad t\in (0,T], \quad x,y\in \Re^d,
\end{equation}
where $\delta=\kappa$ in    the    cases \textbf{A} and \textbf{C}, and    $\delta= \left(1-1/ \alpha+\gamma\right)\wedge \left(1-1/\alpha+\gamma/\alpha\right)\wedge \kappa$ in the     case \textbf{B}.
\end{lem}
 Before we proceed to the proof, we  formulate  some auxiliary statements.
\begin{prop}\label{A1}
\begin{enumerate}
  \item For any   $\lambda>0$, $c>0$ there exists $C>0$ such that
\begin{equation}\label{G1}
G^{(\lambda)}(cx)\leq C G^{(\lambda)}(x).
\end{equation}
%(see   \eqref{Gbet} for the definition of the functions $G^{(\lambda)}, \lambda>0$).
  \item For any  $\lambda_1>\lambda_2$ we have
\begin{equation}\label{G2}
 G^{(\lambda_1)}(x)\leq  G^{(\lambda_2)}(x).
\end{equation}
  \item For any $\eps\in (0, \lambda)$,
\begin{equation}\label{G3}
|x|^{\eps} G^{(\lambda)}(x)\leq C G^{(\lambda-\eps)}(x).
\end{equation}
\end{enumerate}
\end{prop}
  The proof of Proposition~\ref{A1} is obvious; we omit the details. \normal

\begin{prop}\label{A2}  For any $\alpha\in (0,2)$,
\begin{equation}\label{g_a}
g^{(\alpha)}(x)\asymp G^{(\alpha)}(x),
\end{equation}
\begin{equation}\label{g_a_der}
\Big|(\nabla g^{(\alpha)})(x)\Big|\leq C G^{(\alpha+1)}(x),
\end{equation}
\begin{equation}\label{g_a_der2}
\Big|(\nabla^2 g^{(\alpha)})(x)\Big|\leq C G^{(\alpha+2)}(x),
\end{equation}
\begin{equation}\label{g_a_frac}
\Big|(L^{(\alpha)} g^{(\alpha)})(x)\Big|\leq C G^{(\alpha)}(x),
\end{equation}
\begin{equation}\label{g_a_frac_der}
\Big|(\nabla L^{(\alpha)} g^{(\alpha)})(x)\Big|\leq C G^{(\alpha+1)}(x).
\end{equation}

\end{prop}
  The results stated in Proposition~\ref{A2} are partly known; we defer the discussion and the remaining proofs to Appendix~A.\normal

\begin{proof}[Proof of Lemma~\ref{lPhi1}] We consider    the     cases \textbf{A} -- \textbf{C} separately. To improve the readability, here and below we assume that $T>0$ is fixed and, if it is not stated otherwise, in      every    formula containing $t,x$, or $y$  we assume $t\in (0,T], x\in \Re^d, y\in \Re^d$.

\emph{Case}  \textbf{A}. Fix $z\in \Re^d$, and denote
$$
L^z=a(z)L^{(\alpha)};
$$
that is, consider ``the principal part'' of the operator $L$ with the coefficient ``frozen'' at the point $z$ (cf. the discussion in Section~\ref{over}). Denote by $q^z_t(x,y)$  the transition probability density of the process $Z^{(\alpha)}$ with the time, re-scaled by $a(z)$:
$$
  q_t^z(x,y)={1\over t^{d/\alpha}a^{d/\alpha}(z)}g^{(\alpha)}\left({y-x\over t^{1/\alpha}a^{1/\alpha}(z)}\right).
$$
Then $q^z_t(x,y)$  is a fundamental solution to the Cauchy problem for the operator $(\prt_t-L^z)$,
and  $$
p_t^0(x,y)=q_t^y(x,y).
$$

Observe that for     every \normal  fixed $x,y\in \Re^d$ the function $p_t^0(x,y)$   belongs to $C^1((0,\infty))$ as a function of $t$,
  and for    every \normal   fixed $t\in (0, \infty)$ and  $y\in \Re^d$ it  belongs to $C^2_\infty(\Re^d)$ as a function of $x$.
 Since both $\nabla$ and $L^{(\alpha)}$ are well defined on   $C_\infty^2(\Re^d)$, the function $\Phi_t(x,y)$ is well defined by (\ref{Phi}).

  Since $q^z$ is a fundamental solution for $\prt_t-L^z$, one has
\be\label{Phi_a}\ba
\Phi_t(x,y)&=\left[-\Big(\prt_t-L^z_x\Big)p_t^0(x,y)+(L_x-L^z_x)p_t^0(x,y)\right] \Big|_{z=y}=(L_x-L^z_x)p_t^0(x,y) \Big|_{z=y}\\
&= \Big(a(x)-a(y)\Big) L^{(\alpha)}_xp_t^0(x,y)+\Big(b(x), \nabla_x p_t^0(x,y)\Big)\\
&= \Big(a(x)-a(y)\Big)  {1\over t^{d/\alpha+1}a^{d/\alpha+1}(y)}(L^{(\alpha)} g^{(\alpha)})\left({y-x\over t^{1/\alpha}a^{1/\alpha}(y)}\right)\\
&\hspace*{0.5cm}-
{1\over t^{(d+1)/\alpha}a^{(d+1)/\alpha}(y)}\left(b(x), (\nabla g^{(\alpha)})\left({y-x\over t^{1/\alpha}a^{1/\alpha}(y)}\right)\right)=:\Phi_t^1(x,y)+\Phi_t^2(x,y).
\ea
\ee

First we estimate $\Phi^1$.   Since $a(x)$ is bounded    from above and away from zero\normal, and is $\eta$-H\"older continuous, we have
\begin{equation}\label{a_hol_in}
|a(x)-a(y)|\leq c\big(|x-y|^\eta\wedge 1\big)\leq c\big(|x-y|^\kappa\wedge 1\big),
\end{equation}
 where we used that $\kappa<\alpha\wedge \eta$ (recall that by $c$ and  $C$ we denote the generic constants, which may vary from place to place).
Then  by (\ref{g_a_frac}), \eqref{G1}   and \eqref{Qt21}    we obtain %\eqref{G3}
\begin{equation}\label{esP1a}
\begin{split}
|\Phi_t^1(x,y)|& \leq C \frac{|y-x|^\kappa\wedge 1}{t^{1+d/\alpha}} G^{(\alpha)} \left({y-x\over t^{1/\alpha}}\right)%\\
%&\vica = C t^{-1+\kappa/\alpha} \Big(  \Big|\frac{y-x}{t^{1/\alpha}}\Big|^\kappa \wedge t^{-\kappa/\alpha}\Big)\normal \frac{1}{t^{d/\alpha}} G^{(\alpha)} \left({y-x\over t^{1/\alpha}}\right) \\
\leq  C  t^{-1+\kappa/\alpha} Q_t^{(\kappa)}(x,y).
%\left(\left( \Big|\frac{y-x}{t^{1/\alpha}}\Big|^\kappa \vee 1 \Big) \wedge t^{-\kappa/\alpha}\right) \right)\frac{1}{t^{d/\alpha}} G^{(\alpha)} \left({y-x\over t^{1/\alpha}}\right),
\end{split}
\end{equation}
To estimate $\Phi^2$    we use that     the functions  $a(x)$ and $ b(x)$ are  bounded, and $a(x)$ is bounded away from zero. Hence by  (\ref{g_a_der}),  \eqref{G1}  and \eqref{G2}  we have
\begin{equation}\label{esP2a}
|\Phi_t^2(x,y)|\leq Ct^{-(d+1)/\alpha}G^{(\alpha+1)}\left({y-x\over t^{1/\alpha}}\right)
\leq Ct^{-1/\alpha}Q_t^{(0)}\left({y-x\over t^{1/\alpha}}\right).
\end{equation}
\begin{comment} Using  \eqref{G2},  the inequality
\begin{equation}\label{ww10}
1\leq \left(\left( \Big|\frac{z}{t^{1/\alpha}}\Big|^\kappa \vee 1 \Big) \wedge t^{-\kappa/\alpha}\right) \right), \quad t\in (0,1],\,\,z\in \rd,
\end{equation}
and that by our choice of $\kappa$ we have $1+\kappa< \alpha$, we get
\begin{equation}\label{esP2a}
\begin{split}
|\Phi_t^2(x,y)| &\leq C_2t^{-1+ \kappa/\alpha} t^{1-(\kappa+1)/\alpha} \frac{1}{t^{d/\alpha}} G^{(\alpha+1)}\left({y-x\over t^{1/\alpha}}\right)\\
&\leq C_2t^{-1+ \kappa/\alpha} \frac{1}{t^{d/\alpha}} G^{(\alpha+1)}\left({y-x\over t^{1/\alpha}}\right)\\
&\leq
C_2 t^{-1/\alpha}\left(\left( \Big|\frac{y-x}{t^{1/\alpha}}\Big|^\kappa \vee 1 \Big) \wedge t^{-\kappa/\alpha}\right) \right)\frac{1}{t^{d/\alpha}} G^{(\alpha)} \left({y-x\over t^{1/\alpha}}\right), \quad t\in (0,1], \,\, x,y\in \rd.
\end{split}
\end{equation}
\end{comment}
Combining estimates \eqref{esP1a} and \eqref{esP2a}, we
 obtain the required estimate.

\medskip

 \begin{comment}
is given by, and $H_t^{(0)}(x,y)\equiv \frac{1}{t^{d/\alpha}} G^{(\alpha)} \left({y-x\over t^{1/\alpha}}\right)$.

with
    $$
    \delta=\frac{\kappa}{\alpha},\quad
    H_t(x,y)=\left(\left( \Big|\frac{y-x}{t^{1/\alpha}}\Big|^\kappa \vee 1 \Big) \wedge t^{-\kappa/\alpha}\right) \right)\frac{1}{t^{d/\alpha}} G^{(\alpha)} \left({y-x\over t^{1/\alpha}}\right).
    $$
The above  kernel $H_t(x,y)$  has the sub-convolution property, see Proposition~\ref{h-conv} below.

Note that
$\left(\left( \Big|\frac{y-x}{t^{1/\alpha}}\Big|^\kappa \vee 1 \Big) \wedge t^{-\kappa/\alpha}\right) \right)
\leq t^{-\kappa/\alpha}$,
 and in such a way we get by
\begin{equation}\label{pa}
p_t^0(x,y)\asymp {1\over t^{d/\alpha}}G^{(\alpha)}\left({y-x\over t^{1/\alpha}}\right)
\end{equation}
the upper estimate
$$
\big( p^0 \star \Psi\big)_t(x,y) \leq C t^{-\kappa/\alpha}  H_t^{(\kappa)}(x,y)\leq {C\over t^{d/\alpha}}G^{(\alpha)}\left({y-x\over t^{1/\alpha}}\right).
$$
By \eqref{sol} and \eqref{r},  this estimate together   with \eqref{pa} gives  \eqref{ptx1a}.
\end{comment}

\begin{comment}
$$
p_t^0(x,y)\asymp {1\over t^{d/\alpha}}G^{(\alpha)}\left({y-x\over t^{1/\alpha}}\right),
$$
by Corollary~\ref
 Hence, all three items (i)-- (iii) from Section \ref{s21} are verified, which completes the proof of Theorem \ref{t1} and Theorem \ref{t3} in the case (a).\qed
\end{comment}

\emph{Case}  \textbf{B}.   We fix $z\in \Re^d$, and define
$$
L^z=a(z)L^{(\alpha)}+\Big(b(z), \nabla \Big);
$$
that is, consider the entire operator $L$ as its ``principal part'' and ``freeze'' its coefficients at the point $z$ (cf.  the discussion in Section~\ref{over} and the  proof in    the   case \textbf{A}).  The fundamental solution $q^z_t(x,y)$ to the Cauchy problem for  $(\prt_t- L^z)$  is equal  to the transition probability density of the process $Z^{(\alpha)}$ with the time parameter  rescaled by $a(z)$, and with the additional  constant drift $tb(z)$:
\begin{equation}\label{qxi2}
   q_t^z(x,y)={1\over t^{d/\alpha}a^{d/\alpha}(z)}g^{(\alpha)}\left({y-x-tb(z)\over t^{1/\alpha}a^{1/\alpha}(z)}\right).
\end{equation}
Again, we have
$$
p_t^0(x,y)= q_t^y(x,y).
$$
Since $q^\xi$ is the fundamental solution for $\prt_t-L^\xi$, we have in the same way as in  (\ref{Phi_a})
\be\label{Phi_b}\ba
\Phi_t(x,y)&=\left[-\Big(\prt_t- L^z_x\Big)p_t^0(x,y)+(L_x-L^z_x)p_t^0(x,y)\right]\Big|_{z=y}\\
&= \Big(a(x)-a(y)\Big)  L^{(\alpha)}_xp_t^0(x,y)+\Big(b(x)-b(y), \nabla_xp_t^0(x,y)\Big)\\
&= \Big(a(x)-a(y)\Big)  {1\over t^{d/\alpha+1}a^{d/\alpha+1}(y)}(L^{(\alpha)} g^{(\alpha)})\left({y-tb(y)-x\over t^{1/\alpha}a^{1/\alpha}(y)}\right)\\&\hspace*{0.5cm}+
{1\over t^{(d+1)/\alpha}a^{(d+1)/\alpha}(y)}\left(b(y)-b(x), (\nabla g^{(\alpha)})\left({y-tb(y)-x\over t^{1/\alpha}a^{1/\alpha}(y)}\right)\right)\\
&=:\Phi_t^1(x,y)+\Phi_t^2(x,y).
\ea
\ee
Recall \eqref{a_hol_in}, and write
$$
x-y=(y-tb(y)-x)+tb(y).
$$
Then by  elementary   inequalities $|u+v|^{\kappa}\leq 2^{\kappa-1} (|u|^\kappa+ |v|^\kappa)$,  $|u+v|^\kappa \wedge 1 \leq |u|^\kappa \wedge 1 + |v|^\kappa \wedge 1 $,  and the fact that $b(\cdot)$ is bounded, we obtain
\begin{equation}\label{a_hol_in_b}
|a(x)-a(y)| \leq c\big(|y-tb(y)-x|^\kappa\wedge 1\big)+ct^\kappa.
\end{equation}
Then  using (\ref{g_a_frac}),  \eqref{G1}   and \eqref{Qt21}   we derive %   \eqref{G3}
\begin{equation}\label{Phi_1_bound-b}
\begin{split}
|\Phi_t^1(x,y)|&\leq Ct^{-1+\kappa/\alpha}Q_t^{(\kappa)}(x,y)+Ct^{-1+\kappa}Q_t^{(0)}(x,y).
%\left(\left( \Big|\frac{y-tb(y)-x}{t^{1/\alpha}} \Big|^\kappa\vee 1 \right)\wedge t^{-\kappa/\alpha}\right)
%\frac{1}{t^{d/\alpha}} G^{(\alpha)}\left({y-tb(y)-x\over t^{1/\alpha}}\right)\\
%&\quad +C_1t^{-1+\kappa}H_t^{(0)}(x,y),
%\frac{1}{t^{d/\alpha}} G^{(\alpha)}\left({y-tb(y)-x\over t^{1/\alpha}}\right).
\end{split}
\end{equation}
Similar argument can be applied to $\Phi^2_t(x,y)$. Namely,  using that $b(\cdot)$ is $\gamma$-H\"older continuous and bounded, we get
$$
|b(x)-b(y)| \leq c|y-tb(y)-x|^\gamma+ct^\gamma.
$$
Then using \eqref{g_a_der} and  \eqref{G1}--\eqref{G3} we derive
\begin{equation}\label{Phi_2_bound-b}
\begin{split}
|\Phi_t^2(x,y)|&\leq Ct^{-1/\alpha+\gamma/\alpha} \Big|\frac{y-tb(y)-x}{t^{1/\alpha}} \Big|^{\gamma}  \frac{1}{t^{d/\alpha}}G^{(\alpha+1)}\left({y-tb(y)-x\over t^{1/\alpha}}\right)\\
&\quad +Ct^{-1/\alpha+\gamma}\frac{1}{t^{d/\alpha}} G^{(\alpha+1)}\left({y-tb(y)-x\over t^{1/\alpha}}\right)\\
&\leq C  t^{-1/\alpha+\gamma/\alpha} \frac{1}{t^{d/\alpha}}G^{(\alpha+1-\gamma)}\left({y-tb(y)-x\over t^{1/\alpha}}\right) +Ct^{-1/\alpha+\gamma}\frac{1}{t^{d/\alpha}} G^{(\alpha+1)}\left({y-tb(y)-x\over t^{1/\alpha}}\right) \\
&\leq Ct^{-1+\zeta}Q_t^{(0)}(x,y),
%\frac{1}{t^{d/\alpha}} G^{(\alpha)}\left({y-tb(y)-x\over t^{1/\alpha}}\right),
\end{split}
\end{equation}
where $\zeta:= \left(1-1/\alpha+\gamma\right)\wedge \left(1-1/\alpha+\gamma/\alpha\right)   >0\normal$. Thus, we  arrive at \eqref{Phi1}.
\begin{comment}
$$
H_t^{(\kappa)} (x,y) :=
\left(\left( \Big|\frac{y-tb(y)-x}{t^{1/\alpha}} \Big|^\kappa\vee 1 \right)\wedge t^{-\kappa/\alpha}\right)
\frac{1}{t^{d/\alpha}} G^{(\alpha)}\left({y-tb(y)-x\over t^{1/\alpha}}\right),
$$
and
$$
H_t^{(0)}(x,y)= \frac{1}{t^{d/\alpha}} G^{(\alpha)}\left({y-tb(y)-x\over t^{1/\alpha}}\right).
$$

\end{comment}

\medskip
\medskip

\emph{Case}  \textbf{C}.  In contrast with two previous cases, now  we cannot interpret $p_t^0(x,y)$ as a fundamental solution to  a Cauchy problem for some operator with ``frozen'' coefficients. Instead, we use the definition of the flow $\theta_t(y)$ and evaluate $\Phi$ directly.

Operators $\nabla$ and $L^{(\alpha)}$ are homogeneous with respective orders $1$ and  $\alpha$. From the identity
$$
(\partial_t - L^{(\alpha)})\big[t^{-d/\alpha} g^{(\alpha)}(t^{-1/\alpha}x)\Big] =0,
$$
we derive
\begin{align*}
\prt_tp_t^0(x,y)&=\Big[a(y) \frac{1}{a^{d/\alpha}(y)t^{d/\alpha} } (L^{(\alpha)} g^{(\alpha)})\left(\frac{w}{t^{1/\alpha}  a^{1/\alpha}(y)\normal} \right)\\
&\quad +\Big(\prt_t\theta_t(y), \frac{1}{a^{d/\alpha+1}(y)t^{d/\alpha+1} } (\nabla g^{(\alpha)})\left(\frac{w}{t^{1/\alpha}  a^{1/\alpha}(y)\normal} \right)\Big)\Big]\Big|_{w=\theta_t(y)-x}.
\end{align*}
On the other hand,
\begin{align*}
L_xp_t^0(x,y)&=\Big[a(x)\frac{1}{a^{d/\alpha}(y)t^{d/\alpha} } (L^{(\alpha)} g^{(\alpha)})\left(\frac{w}{t^{1/\alpha}  a^{1/\alpha}(y)\normal} \right)\\
&\quad \quad -\Big(b(x), \frac{1}{a^{d/\alpha+1}(y)t^{d/\alpha+1} } (\nabla g^{(\alpha)})\left(\frac{w}{t^{1/\alpha}  a^{1/\alpha}(y)\normal} \right)\Big)\Big]\Big|_{w=\theta_t(y)-x}.
\end{align*}
   Since   $\prt_t\theta_t(y)=-b(\theta_t(y))$, we  finally  get
\begin{equation}\label{Phi_c}
\begin{split}
\Phi_t(x,y)&=\big(  L_x-\partial_t\big) p_t^0(x,y)\\
&=\Big(a(x)-a(y)\Big) {1\over t^{d/\alpha+1}a^{d/\alpha+1}(y)}(L^{(\alpha)} g^{(\alpha)})\left({\theta_t(y)-x\over t^{1/\alpha}a^{1/\alpha}(y)}\right)\\&+
{1\over t^{(d+1)/\alpha}a^{(d+1)/\alpha}(y)}\left(b(\theta_t(y))-b(x), (\nabla g^{(\alpha)})\left({\theta_t(y)-x\over t^{1/\alpha}a^{1/\alpha}(y)}\right)\right)
\\&=:\Phi_t^1(x,y)+\Phi_t^2(x,y).
\end{split}
\end{equation}
Since $b$ is bounded, we have $|\theta_t(y)-y|\leq ct$. Similarly to (\ref{a_hol_in_b}) we have
\begin{equation}\label{a_hol_in_c}
|a(x)-a(y)|\leq  c(|\theta_t(y)-x|^\kappa\wedge 1)+c t^\kappa.
\end{equation}
   Then using \eqref{g_a_frac}, \eqref{G1} and  \eqref{G2}, we get
\begin{align*}
|\Phi_t^1(x,y)|&\leq Ct^{-1+\kappa/\alpha}   \Big(\Big| \frac{\theta_t(y)-x}{t^{1/\alpha}}\Big|^\kappa \wedge t^{-\kappa/\alpha}\Big)\normal \frac{1}{t^{d/\alpha}}G^{(\alpha)}\left({\theta_t(y)-x\over t^{1/\alpha}}\right) \\ &\quad +Ct^{-1+\kappa}\frac{1}{t^{d/\alpha}} G^{(\alpha)}\left({\theta_t(y)-x\over t^{1/\alpha}}\right)\\
&  =\normal  C  t^{-1+\kappa/\alpha} Q_t^{(\kappa)}(x,y)+ C t^{-1+\kappa} Q_t^{(0)}(x,y).
\end{align*}
For $\Phi^2$  we have, since $b(x)$ is Lipschitz continuous,
$$
|\Phi_t^2(x,y)|\leq C t^{-d/\alpha}\left|{\theta_t(y)-x\over t^{1/\alpha}}\right|
 \left|(\nabla g^{(\alpha)})\left({\theta_t(y)-x\over t^{1/\alpha}a^{1/\alpha}(y)}\right)\right|.
$$
Thus, using (\ref{g_a_der}) and  \eqref{G1}--(\ref{G3}), we derive
$$
|\Phi_t^2(x,y)|\leq  C Q_t^{(0)}(x,y).
%\frac{C_3}{t^{d/\alpha}}G^{(\alpha)}\left({\theta_t(y)-x\over t^{1/\alpha}}\right).
$$
Combining the above estimates,  we arrive at \eqref{Phi1}.
\begin{comment}
$$
H_t^{(\kappa)}(x,y):= \left(\left( \Big| \frac{\theta_t(y)-x}{t^{1/\alpha}}\Big|^\kappa \vee 1 \right) \wedge t^{-\kappa/\alpha}\right)  \frac{1}{t^{d/\alpha}}G^{(\alpha)}\left({\theta_t(y)-x\over t^{1/\alpha}}\right),
$$
and
$$
H_t^{(\kappa)}(x,y):=   \frac{1}{t^{d/\alpha}}G^{(\alpha)}\left({\theta_t(y)-x\over t^{1/\alpha}}\right).
$$

We proceed in the similar fashion as in Case (b).
By Lemma~\ref{l1} we get
$$
\Big| \Phi^{\star k}_t(x,y)\Big| \leq C_1(k_0)   \Big( t^{-1+\kappa/\alpha} H_t^{(\kappa)}(x,y)+
t^{-1+k \kappa} H_t^{(0)}(x,y)\Big).
$$
Take  $k_0> [1/\alpha]+1$. For such a choice of $k_0$ we have $\kappa/\alpha<k_0 \kappa$, implying
$$
\Big| \Phi^{\star k}_t(x,y)\Big| \leq C_2(k_0)  t^{-1+\kappa/\alpha} H_t^{(\kappa)}(x,y).
$$
Then similarly to  Corollary~\ref{c1} we get
$$
\Big| \sum_{l\geq 1} \Phi^{\star lk_0}_t(x,y)\Big|
\leq C_3(k_0)   t^{-1+\kappa/\alpha} H_t^{(\kappa)}(x,y),
$$
and thus
\begin{align*}
\Big|\Psi_t(x,y)\Big|&= \Big| \Big(\Big(\sum_{j=1}^{k_0-1} \Phi^{\star j}\Big) \star \Big(\sum_{l\geq 1} \Phi^{\star lk_0}\Big)\Big)_t(x,y)\Big| \\
&\leq C_4(k_0)\Big(  t^{-1+\kappa/\alpha} H_t^{(\kappa)}(x,y) + t^{-1+\kappa} H_t^{(0)}(x,y)\Big).
\end{align*}
Using Lemma~\ref{l1}, \eqref{sol} and \eqref{r},  we derive \eqref{ptx1c}.  \qed
\end{comment}

\end{proof}

\subsection{Convergence of the parametric series and the estimate for the residue term}\label{s21}

 To estimate the  convolution powers $\Phi^{\star k}_t(x,y)$, $k\geq 1$, inductively we first slightly modify the upper bound for $\Phi$ obtained in Lemma~\ref{lPhi1}. For $\lambda\in [0,\alpha)$,  define
\begin{equation}\label{Ht21}
H_t^{(\lambda)} (x,y):=
\left(\left( \Big|\frac{\omega(t,y)-x}{t^{1/\alpha}}\Big|^\lambda \vee 1 \Big) \wedge t^{-\lambda/\alpha}\right) \right)\frac{1}{t^{d/\alpha}}  G^{(\alpha)}\normal \left({\omega(t,y)-x\over t^{1/\alpha}}\right).
\end{equation}
 Clearly,
 $$
 Q^{(\lambda)}_t(x,y)\leq H^{(\lambda)}_t(x,y),
 $$
 and therefore a (weaker) analogue of \eqref{Qt21} with  $Q^{(\kappa)}_t(x,y)$, $Q^{(0)}_t(x,y)$ replaced by $H^{(\kappa)}_t(x,y)$, $H^{(0)}_t(x,y)$ holds true. The reason for us to modify (in fact, to weaken) estimate    \eqref{Phi1}    in that way is that this form is well designed for  further inductive estimation of convolution powers of $\Phi$; see the  detailed discussion of this point in Remark~\ref{rem_conv} below. For possible  further reference, we first develop this calculation in a general form, and then apply it for the  particular function $\Phi_t(x,y)$ and kernels $H^{(\lambda)}_t(x,y)$.

\begin{dfn}  A non-negative kernel $\{H_{t}(x,y), t>0, x,y\in \Re^d\}$ has a sub-convolution property, if for every $T>0$ there exists a constant $C_{H,T}>0$ such that
\begin{equation}\label{H0}
(H_{t-s}* H_s)(x,y)\leq C_{H,T} H_{t}(x,y), \quad t\in (0, T], \quad s\in (0, t), \quad x,y\in \Re^d.
\end{equation}
The kernel $\{H_{t}(x,y), t>0, x,y\in \Re^d\}$ has a super-convolution property if the sign ``$\leq$'' in \eqref{H0} is changed to ``$\geq$''.
\end{dfn}

\begin{lem}\label{lH10}
Suppose that  the function $\Phi_t(x,y)$  satisfies
\begin{equation}\label{F10}
\big|\Phi_t(x,y)\big| \leq C_{\Phi,T} \Big( t^{-1+\delta_1} H_t^1(x,y)+ t^{-1+\delta_2} H_t^2(x,y)\Big), \quad t\in (0,T], \, x,y\in \rd,
\end{equation}
with some $\delta_1, \,\delta_2\in (0,1)$ and some non-negative kernels $H_t^1(x,y), H_t^2(x,y).$ Assume also that
the kernels $H_t^i(x,y)$, $i=1,2$, satisfy  the sub-convolution property with constant $C_{H,T}$,  and
\begin{equation}\label{H12}
H_t^1(x,y)\geq H_t^2(x,y).
\end{equation}

  Then  for    every      $t\in (0,T]$, $x,y\in \rd$,   the statements below hold true.
\begin{itemize}
\item[a)]   For $k\geq 1$, \normal
\begin{equation}\label{Fk}
\Big| \Phi^{\star k}_t(x,y)\Big| \leq  \frac{C_1C_2^k}{\Gamma(k\zeta)}
t^{-1+(k-1)\zeta} \Big(t^{\delta_1}H_t^{1}(x,y) + t^{\delta_2}  H_t^2(x,y)\Big),
\end{equation}
where
\begin{equation}\label{const}
   C_1 = (3(T\vee 1)C_{H,T})^{-1},  \quad C_2= 3 (T\vee 1)C_{\Phi,T} C_{H,T} \Gamma(\zeta),\quad \text{ and}\quad
\zeta=\delta_1\wedge \delta_2;\normal
\end{equation}

\item[b)] The series $\sum_{k=1}^\infty \Phi_t^{\star k}(x,y)$ is absolutely convergent and
\begin{equation}\label{F20}
\Big|\sum_{k=1}^\infty \Phi_t^{\star k}(x,y)\Big|\leq C  \Big(t^{-1+\delta_1} H_t^1(x,y)+ t^{-1+\delta_2} H_t^2(x,y)\Big);
\end{equation}

\item[c)]
\begin{equation}\label{F25}
\Big|   \Big( H^1 \star \sum_{k=1}^\infty \Phi^{\star k}\Big)_t(x,y)\normal\Big|\leq C  t^{\zeta} H_t^1(x,y).
\end{equation}
\end{itemize}
\end{lem}

\begin{proof}
Using the sub-convolution property of $H_t^{i}(x,y)$ and \eqref{H12}, we get
\begin{equation}\label{H2}
\begin{split}
\big(H^{1}_{t-s}* H^{1}_s\big)(x,y)&\leq  C_{H,T} H_t^{1}(x,y),\\
\big(H^{1}_{t-s}* H^{2}_s\big)(x,y)&\leq  C_{H,T} H_t^{1}(x,y),\\
\big(H^{2}_{t-s}* H^{2}_s\big)(x,y)& \leq  C_{H,T} H_t^{2}(x,y),
\end{split}
\end{equation}
for every $t\leq T, s<t$.

    Observe that  \eqref{Fk} with $k=1$ coincides with  \eqref{F10}\normal. Next, we suppose that \eqref{Fk} holds for $k\geq 1$, and show that it also holds for $k+1$.  Using   \eqref{F10}, \eqref{Fk}, \eqref{H2}, and  the sub-convolution property for $H_t^1(x,y)$, we get
\begin{equation}\label{21}
\begin{split}
|\Phi_t^{\star (k+1)}(x,y)|&=\left|\int_0^t(\Phi^{\star k}_{t-s}*\Phi_s)(x,y)\, ds\right|
\\
&\leq \frac{ C_1C_2^k C_{\Phi,T}}{\Gamma(k\zeta)}\Big\{  \int_0^t (t-s)^{-1+(k-1)\zeta}
(t-s)^{\delta_1}s^{-1+\delta_1}(H_{t-s}^1* H_s^1)(x,y)ds \\
&\quad +\int_0^t  (t-s)^{-1+(k-1)\zeta} (t-s)^{\delta_1} s^{-1+\delta_2} (H_{t-s}^1 *H_s^2)(x,y)ds\\
 &\quad +\int_0^t (t-s)^{-1+(k-1)\zeta} (t-s)^{\delta_2} s^{-1+\delta_1}(H_{t-s}^2*H_s^1)(x,y)ds\\
 &\quad +\int_0^t  (t-s)^{-1+(k-1)\zeta}(t-s)^{\delta_2}s^{-1+\delta_2}(H_{t-s}^2*H_s^2)(x,y)\, ds\Big\}\\
& \leq  \frac{   (T^{\delta_1\vee\delta_2-\zeta}\vee 1)\normal 3C_{\Phi,T} C_1 C_2^k C_{H,T}}{\Gamma(k \zeta)} \Beta(k \zeta , \zeta) t^{-1+k  \zeta\normal}
\Big(  t^{\delta_1} H_t^1(x,y)+ t^{\delta_2} H_t^2(x,y)\Big)
\\
&\leq  \frac{   (1\vee T)\normal 3C_{\Phi,T} C_1 C_2^k C_{H,T}}{\Gamma(k \zeta)} \Beta(k \zeta , \zeta) t^{-1+k  \zeta\normal}
\Big(  t^{\delta_1} H_t^1(x,y)+ t^{\delta_2} H_t^2(x,y)\Big)\\
&=\frac{C_1 C_2^{k+1} }{\Gamma((k+1)\zeta)}  t^{-1+k\zeta}
\Big(  t^{\delta_1} H_t^1(x,y)+ t^{\delta_2} H_t^2(x,y)\Big),
\end{split}
\end{equation}
  which proves \eqref{Fk}\normal.  By  \eqref{Fk},  the series    $\sum_{k=2}^\infty
\Phi_t^{\star (k)} (x,y)$\normal  converge absolutely and
\begin{equation}\label{F30}
\Big|\sum_{k=2}^\infty
\Phi_t^{\star k } (x,y) \Big| \leq C t^{-1+\zeta} \big(t^{\delta_1}H_{t}^{1}(x,y)+t^{\delta_2}H_t^2(x,y)\big),
\end{equation}
which gives \eqref{F20}. Finally, \eqref{F25} follows from  \eqref{F10}, \eqref{F20} and \eqref{H2}.
\end{proof}

In the following proposition we collect the properties of the kernels $H^{(\lambda)}_t(x,y), \lambda\geq 0$ which we require to complete the proof of  Theorem \ref{t1}.  We defer the proof of this proposition to Appendix  B.

\begin{prop}\label{H1} In each of the cases \textbf{A} -- \textbf{C}, for every  $\lambda\in [0,\alpha), T>0$ the kernel   $H_{t}^{(\lambda)}(x,y)$ satisfies
\begin{equation}\label{Hint}
c\leq \int_\rd  H_t^{(\lambda)}(x,y)dy\leq C, \quad t\leq T,
  \end{equation}
  and possesses  the  sub- and super-convolution properties.
\end{prop}
\begin{rem}\label{rem_conv}   Now we can explain why it is convenient to replace in \eqref{Phi1} the kernels $Q^{(\kappa)}_t(x,y)$,
   For $\lambda>0$ the kernel $Q^{(\lambda)}_t(x,y)$  is equal to  zero when $x=\omega(t,y)$, and using this observation we can easily verify  that $Q^{(\lambda)}_t(x,y)$ does not satisfy the sub-convolution property. On the contrary, by Proposition~\ref{H1} the kernels $H^{(\lambda)}_t(x,y), \lambda\geq 0,$  possess the sub-convolution property, hence we can easily derive the bounds for the convolutions powers $\Phi^{\star k}$
 using  Lemma \ref{lH10}. We remark that the sub- and super-convolution properties for  the kernels $H^{(\lambda)}_t(x,y), \lambda\geq 0$ are, in a sense, inherited from the  convolution identity for  the transition probability density of a symmetric $\alpha$-stable process, which is just the  Chapman-Kolmogorov equation for this process:
$$
\int_\rd  g_{t-s}^{(\alpha)}(z-x)g_s^{(\alpha)}(y-z)dz=g_t^{(\alpha)}(y-x), \quad 0<s<t,
$$

An easier, but less precise way to estimate $\Phi_t^{\star k}(x,y)$, $k\geq 1$, dates back to \cite{Ko89}, where  the kernels $Q_t^{(\kappa)}(x,y)$ were  bounded from above by $\frac{C}{t^{d/\alpha}} G^{(\alpha-\kappa)}\left(\frac{y-x}{t^{1/\alpha}}\right)$. These modified kernels possess the sub-convolution property as well. However, their ``tails'' are heavier than those of the $\alpha$-stable density, and therefore such an estimate does not lead to a precise bound for the residue.   The latter weak point was resolved  in \cite{CZ13}, where  the (mixed) $k$-th convolutions of $Q_t^{(\kappa)}(x,y)$ and $Q_t^{(0)}(x,y)$, $k\geq 1$,  were estimated directly, although in a rather cumbersome way.  Our way to estimate the  convolutions is motivated by both approaches,  and inherits their advantages: by using the sub-convolution property we make the overall proof reasonably transparent, and because the ``tails'' of the auxiliary kernels $H_t^{(\kappa)}(x,y)$  and $Q_t^{(\kappa)}(x,y)$ are the same, our upper bounds on $\Phi_t^{\star k}(x,y)$ coincide with those obtained in  \cite{CZ13}.\normal

\end{rem}

\begin{proof}[Proof of statements 1 and 3 of Theorem~\ref{t1}]
We have
\begin{equation}\label{Phi_bound}
|\Phi_t(x,y)|\leq C_{\Phi,T} \Big(t^{-1+\kappa/\alpha}H_t^{(\kappa)}(x,y)+ t^{-1+\delta} H_t^{(0)}(x,y)\Big),
\end{equation}
which is just \eqref{Phi1} modified as we have explained  above. Then we apply Lemma \ref{lH10} with $\delta_1= \kappa/\alpha$, $\delta_2= \delta$,  $H^1_t(x,y)= H_t^{(\kappa)}(x,y)$ and $H_t^2(x,y)= H_t^{(0)}(x,y)$, and get
\begin{equation}\label{Phik}
\Big| \Phi^{\star k}_t(x,y)\Big| \leq   \frac{ C_1C_2^k }{\Gamma(k\zeta)}
t^{-1+(k-1)\zeta} \Big(t^{\kappa/\alpha}H_t^{(\kappa)}(x,y) + t^{\delta}  H_t^{(0)}(x,y)\Big),
\end{equation}
\begin{equation}\label{Psi1}
|\Psi_t(x,y)|\leq C\Big( t^{-1+\kappa/\alpha}H_t^{(\kappa)} (x,y)+ t^{-1+\delta} H_t^{(0)}(x,y)\Big),
\end{equation}
where    $\zeta= \delta\wedge (\kappa/\alpha)$.
In addition,  we have  $p_t^0(x,y)   \asymp  \normal H_t^{(0)}(x,y)$, and therefore by    \eqref{Psi}   and
 \eqref{F25}
\begin{equation}\label{r1}
|r_t(x,y)|\leq C\Big( t^{\kappa/\alpha}H_t^{(\kappa)} (x,y)+ t^{\delta} H_t^{(0)}(x,y)\Big).
\end{equation}
This completes the proof of statement 1.   Since
$$
H_t^{(\kappa)}(x,y)=
\begin{cases}
H_t^{(0)}(x,y),& |\omega(t,y)-x|\leq t^{1/\alpha}\\
\frac{|\omega(t,y)-x|^{\kappa}}{t^{\kappa/\alpha}} H_t^{(0)}(x,y),& t^{1/\alpha}\leq |\omega(t,y)-x|\leq 1,\\
t^{-\kappa/\alpha}H_t^{(0)}(x,y), & |\omega(t,y)-x|\geq 1,
\end{cases}
$$
this also implies \eqref{r_bound} and completes the proof of statement 3.
\end{proof}

The proof of statement 2 is postponed to the next subsection, where the continuity issues are treated in a unified way.

\subsection{Continuity properties of $p_t(x,y)$ and $P_t$}\label{s25}

\begin{proof}[Proof of statement 2 of Theorem~\ref{t1}] In each of the cases \textbf{A} -- \textbf{C}, one can verify directly and easily that the function $p_t^0(x,y)$ and the corresponding $\Phi_t(x,y)$ are continuous in  $(t,x,y)\in (0,\infty)\times\Re^d\times\Re^d$. We show by induction  the continuity of the functions $\Phi^{\star k}_t(x,y)$, $k\geq 2$.

Suppose that $\Phi_t^{\star (k-1)}(x,y)$ is continuous on $(0,\infty)\times\Re^d\times\Re^d$.  Denote
\begin{equation}\label{apr1}
I_R(t,s,x,y):= \int_{B(0,R)} \Phi_{t-s}^{\star (k-1)} (x,z)\Phi_s(z,y)dz,
\end{equation}
where $B(0,R)$ is the ball in $\Re^d$ centered at 0 with radius $R$. By the induction assumption, the function under the integral is continuous in $t,x,y$ for $0<s<t$.

Fix $R_0>0, \tau>0, T>\tau,$ and $\eps\in (0, \tau)$. Then the expression under the integral  is  uniformly continuous in $t,x,y$
for $s\in  [\eps,t-\eps], t\in [\tau,T], x\in  B(0,R_0), y\in B(0,R_0)$, which by the dominated convergence theorem implies the continuity of $I_R(t,s,x,y)$ in the same domain,  if $R>R_0$.

Denote
$$
I(t,s,x,y)=\int_\rd \Phi_{t-s}^{\star (k-1)} (x,z)\Phi_s(z,y)dz,
$$
and observe that
\be\label{conv_33}
|I(t,s,x,y)-I_R(t,s,x,y)|\leq \int_{\rd\backslash B(0,R)}| \Phi_{(t-s)}^{\star (k-1)} (x,z)\Phi_{s}(z,y)|dz\to 0, \quad R\to \infty,
\ee
uniformly in $s\in  [\eps,t-\eps], t\in [\tau,T], x\in  B(0,R_0), y\in B(0,R_0)$. Indeed,  for $R$ large enough and $z\in \rd\backslash B(0,R)$, $y\in B(0,R_0)$, we have $|z-\omega(s,y)|\asymp |z|$, because the function $\omega(s,y)$ is bounded. This implies by \eqref{Phi_bound}
$$
|\Phi_s(z,y)|\leq C(\eps)|z|^{-d-\alpha+\kappa}
$$
for $s\in [\eps,t-\eps]$, $t\in [\tau,T]$, $z\in \rd\backslash B(0,R)$, $ y\in B(0,R_0)$. Since for such $s,\,t,\, x$ and $z$
$$
|\Phi_{t-s}^{\star (k-1)}(x,y)|\leq C(\eps),
$$
convergence \eqref{conv_33} follows by the dominated convergence theorem. This gives that $I(t,s,x,y)$ is continuous in $t,x,y$.

  Since
\be\label{conv_34}
\Phi_t^{\star k}(x,y)=\int_0^t I(t,s,x,y)\, ds=\lim_{\eps\to 0+}\int_\eps^{t-\eps} I(t,s,x,y)\, ds,
\ee
the same argument yields continuity of $\Phi_t^{\star k}(x,y)$. Namely,  proceeding in the same way as in  \eqref{21},  we derive that for $t\in [\tau, T]$
\begin{equation}\label{21a}
\begin{split}
|I(t,s,x,y)|&=\left| \int_\rd \Phi_{(t-s)}^{\star (k-1)} (x,z)\Phi_{s}(z,y)dz\right| \\&\leq  c  (t-s)^{-1+(k-1)\zeta} s^{-1+\zeta} \Big( H_t^{(\kappa)}(x,y)+ H_{t}^{(0)} (x,y)\Big)\\
&\leq C  (t-s)^{-1+(k-1)\zeta} s^{-1+\zeta}, \quad \zeta=\delta\wedge (\kappa/\alpha).
\end{split}
\end{equation}
Hence for every $\eps>0$ the integral in the right hand side of \eqref{conv_34} is continuous by the dominated convergence theorem. Convergence
in \eqref{conv_34} is uniform on compacts in $(0,\infty)\times\Re^d \times \Re^d$; one can easily prove this using \eqref{21a} and the expressions for $H^{(\kappa)}, H^{(0)}$. This completes the proof of continuity of $\Phi_t^{\star k}(x,y)$.

Since the series $\sum_{k=1}^\infty \Phi_t^{\star k}(x,y)$ converges uniformly on compact subsets of $(0,\infty)\times \rd\times \rd$, the function $\Psi_t(x,y)$ is continuous, as well. Continuity of
$$r_t(x,y)=(p^0 \star \Psi)_t(x,y)=p_t(x,y)-p_t^0(x,y)
$$ follows by the same argument as the for $\Phi_t^{\star k}(x,y)$.
\end{proof}

In the rest of the section we derive the basic properties of the family of the operators $P_t$, $t>0$, defined by \eqref{semi}. This is  used in the further analysis of the function $p_t(x,y)$ obtained via the parametrix construction.

Recall that $p_t^0(x,y)=H_t^{(0)}(x,y)$, hence by  \eqref{r1} and \eqref{Hint}, for every $T>0$
\be\label{weight}
\int_\rd p_t(x,y)dy\leq C, \quad t\in (0,T], \, x\in \rd.
\ee
Then the family $P_tf(x), t>0$, is  well defined by \eqref{semi} for  any bounded function $f$.   We also put $P_0f=f$.
  In order to show that   each $P_t$  maps  $C_\infty(\rd)$ into itself, and the family $\{P_t, t\geq 0\}$ is strongly continuous at the point $t=0$,   we need the following proposition.
\begin{prop}\label{H-bound-a}
In each of the cases \textbf{A} -- \textbf{C}, for every $f\in C_\infty(\rd)$
\begin{equation}\label{Fel}
\lim_{|x|\to\infty} \int_\rd p_t^0(x,y)f(y)dy =0, \quad \quad  t>0,
\end{equation}
\begin{equation}\label{Fel2}
\sup_{x\in \rd} \Big|\int_\rd p_t^0(x,y)f(y)dy -f(x)\Big|\to 0, \quad t\to 0.
\end{equation}
\end{prop}
 We defer the proof of this proposition to  Appendix B.
\begin{lem}\label{P-cont} In each of the cases \textbf{A} -- \textbf{C} of Theorem \ref{t1}, the following properties hold true.
\begin{enumerate}\label{cont}
  \item For   every     $t>0$,  $P_t$ is a bounded operator in  $ C_\infty(\rd)$.

  \item For every $f\in C_\infty(\rd)$  we have $\lim_{t\to 0+} \|P_t f-f\|_\infty=0$.

\end{enumerate}
\end{lem}

\begin{proof}[Proof of Lemma~\ref{P-cont}]
1. The proof of continuity of $P_tf$ is    repeats     the proof of continuity of $p_t(x,y)$, and thus is omitted.

   To prove that $P_tf(x)$ vanishes as $|x|\to \infty$, we use  the representation for $p_t(x,y)$, estimate  \eqref{r_bound} on $r_t(x,y)$, and \eqref{Fel}:
 \begin{align*}
\Big| \int_\rd p_t(x,y)f(y)dy \Big| & = \Big|\int_\rd   \Big(p_t^0(x,y)+ r_t(x,y)\Big)f(y)dy\Big| \\& \leq C \int_\rd p_t^0(x,y)|f(y)|dy  \to 0, \quad |x|\to\infty.
\end{align*}
Hence $P_t$ maps $C_\infty(\rd)$ to $C_\infty(\rd)$. Clearly, this operator is linear and bounded (its norm is bounded by the constant $C$ from \eqref{weight}).

2.   By  \eqref{r1} and \eqref{Hint},
\begin{equation}\label{Fel3}
\sup_x \Big|\int_\rd r_t(x,y)f(y)dy\Big|\leq C( t^{\kappa/\alpha}+ t^{\delta})\|f\|_\infty \to0, \quad t\to 0.
\end{equation}
Together with  \eqref{Fel2} this gives the required statement.
\end{proof}

\section{ Proofs of Theorem~\ref{t2} and Theorem~\ref{t3}}  \label{s4}

\begin{comment}
\subsection{Diff}

\begin{lem}\label{aux-e}
\begin{enumerate}

 \item  $\partial_t p_{t,\epsilon}(x,y)\to \partial_t p_t(x,y)$ as $\epsilon\to 0$, uniformly in
 $(t,x)\in [\tau,T]\times \rd\times \rd$ for any $\tau>0$, $T>\tau$.
 %on compact subsets of $(0,\infty)\times \rd\times \rd$.

  \item  For every  $f\in C_\infty(\rd)$ \, one has
$$
\prt_t\int_\rd  p_{t,\eps}(x,y)f(y)dy\to \int_\rd \prt_t p_{t}(x,y)f(y)dy, \quad \epsilon\to 0,
 $$
%uniformly on compact subsets of $(0,\infty)\times \rd$.
uniformly in  $(t,x)\in [\tau,T]\times \rd$ for any $\tau>0$, $T>\tau$.
\end{enumerate}

\begin{proof}
 \emph{5,6}  Differentiating  (\ref{46}), we get
$$\ba
\prt_t\int_\rd  p_{t,\eps}(x,y)f(y)dy&=\int_\rd  \prt_tp_{t,\eps}(x,y)f(y)dy=\int_\rd \prt_t p_{t+\eps}^0(x,y)f(y)\, dy\\
&+\int_0^{t/2}\int_{\Re^d}\int_{\Re^d}(\prt_t p^0)_{t-s+\eps}(x,z)\Psi_s(z,y)f(y)\,dzdyds
\\
&+\int_0^{t/2}\int_{\Re^d}\int_{\Re^d} p^0_{s+\eps}(x,z)(\prt_t\Psi)_{t-s}(z,y)f(y)\,dzdyds\\&+\int_{\Re^d}\int_{\Re^d} p^0_{t/2+\eps}(x,z)\Psi_{t/2}(z,y)f(y)\,dzdy.
\ea
$$
Using the bounds for $p^0, \Psi$ and their $\prt_t$-derivatives obtained above, one can get the required convergence.
\end{proof}

\end{comment}
If we  knew that the function $p_t(x,y)$, constructed in Theorem~\ref{t1}, is a fundamental solution to the Cauchy problem for  $\partial_t-L$, then the properties of $P_t$ stated in Theorem \ref{t2} could be obtained in a standard way based on the positive maximum principle, which holds true for the operator $L$, see \cite[Th.4.5.13]{Ja01}. We refer to   \cite[Ch.4]{EK86} for  the definition and the general results on the positive maximum principle.   \normal   However, on this way we meet substantial difficulties already when we try to prove that $L_x$ can be applied to $p_t(x,y)$. Recall that the domain of $L$ is $C_\infty^2(\rd)$. On the other hand, for the function $p_t^0(x,y)$ we have the following bounds, which can be derived from
 Proposition~\ref{A2}, but yet it seems that they can not be improved:
\begin{equation}\label{ptder}
|\nabla_x p_{t}^0(x,y)|\leq Ct^{-1/\alpha}H_t^{(0)}(x,y),\quad |\nabla^2_{xx} p_{t}^0(x,y)|\leq Ct^{-2/\alpha}H_t^{(0)}(x,y).
\end{equation}
Hence the   spatial    derivatives of $p_t^0(x,y)$ have  non-integrable singularities in $t$ near 0, and such a  behaviour of $p_t^0(x,y)$ makes it unclear why $p_t(x,y)$ should belong to the domain of  $L_x$.

This difficulty is rather typical.    In what follows, we develop an approach which we believe to be well applicable in various situations similar to those  explained above.  The keystone  of this approach is that we use certain \emph{approximate} solution to  the Cauchy problem for  $\partial_t-L$ instead of the exact one.

\subsection{Approximate fundamental solution: construction and basic properties}\label{apr}

For $\eps>0$ we introduce the  auxiliary function
\begin{equation}\label{pe}
p_{t,\epsilon}(x,y):=p_{t+\epsilon}^0(x,y)  + \int_0^t \int_{\rd}  p_{t-s+\eps}^0(x,z)  \Psi_s(z,y) dzds,
\end{equation}
and define
\begin{equation} \label{Pte}
P_{t,\epsilon} f(x):=\int_\rd p_{t,\epsilon}(x,y)f(y)dy, \quad t>0, \, x\in \rd, \quad f\in C_\infty(\rd).
\end{equation}

The  additional time shift by positive $\eps$ removes the singularity at the point $s=t$, and this is the main reason why  $p_{t,\eps}(x,y)$  possesses the following properties:

\begin{itemize}

\item[(i)]   $p_{\cdot,\epsilon}(x,y)\in C^1((0,\infty))$ for any fixed $\epsilon>0$, $x,y\in \rd$;

\item[(ii)]  $p_{t,\epsilon}(\cdot,y)\in C^2_\infty(\rd)$ for any fixed $\epsilon>0$, $t>0$, $y\in \rd$;

\item[(iii)] for any $0<\tau<T$ we have $ p_{t,\epsilon}(x,y)\to p_t(x,y)$  as  $ \epsilon\to 0$, uniformly in
    $(t,x,y)\in [\tau,T]\times \rd\times \rd$.\normal

\item[(iv)] for any $0<\tau<T$ we have
 $$
 q_{t,\epsilon}(x,y):=\big(\prt_t-L_x\big) p_{t,\epsilon}(x,y)\to 0,  \quad \eps\to 0,
 $$
  uniformly in  $(t,x,y)\in [\tau,T]\times \rd\times \rd $.
\end{itemize}

   We do not give the detailed proof of these properties, because everywhere below (with the  only exception of the proof of Proposition~\ref{p1}) we will need the analogues of these properties for $P_{t,\eps}f$, see  Lemma \ref{aux-ep} and Lemma \ref{l5}; the proofs of (i) -- (iv) above are completely analogous and omitted.\normal

Properties (iii),(iv) motivate the name \emph{approximate fundamental solution} we use for $p_{t,\epsilon}(x,y)$: it  approximates $p_{t}(x,y)$ and ``satisfies'' (\ref{L_fund}) in the approximate sense.

\begin{lem}\label{aux-ep}
\begin{enumerate}
\item  For every   $f\in C_\infty(\rd)$, $\eps>0$ the function $P_{t,\eps}f(x)$ belongs to $C^1((0,\infty))$ as a function of $t$ and
to $C^2_\infty(\rd)$ as a function of $x$.
 \item    For every   $f\in C_\infty(\rd)$, $T>0$,
  \be\label{conv_pte}
 \|P_{t,\epsilon}f- P_t f\|_\infty\to 0, \quad \epsilon\to 0,
  \ee
   uniformly in $t\in [0,T ]$, and for every $\eps>0$
  \be\label{conv_x}
  P_{t,\epsilon}f(x)\to 0, \quad |x|\to \infty
  \ee
 uniformly in $t\in [0,T]$.

 \item For $f\in C_\infty(\rd)$ we have
 $$
 \lim_{t,\eps\to 0+} \|P_{t,\eps} f-f\|_\infty =0.
  $$
 \end{enumerate}
\end{lem}
In the proof of this lemma we use the following proposition.
\begin{prop}\label{p0der}
 \begin{enumerate}
   \item    The derivative  $\prt_tp_t^0(x,y)$ exists, and is continuous
 in  $(t,x,y)\in(0,\infty)\times\Re^d\times\Re^d$.\normal

   \item    There exists $C>0$ such that \normal
   $$
   |\prt_tp_t^0(x,y)|\leq C\Big(t^{-1}\vee t^{-1/\alpha}\Big)H_t^{(0)}(x,y),\quad  t>0, \quad x,y\in \Re^d.
   $$
     \end{enumerate}
\end{prop}
   The proof follows directly from the definition of $p_t^0(x,y)$,  the properties of $g^{(\alpha)}(x)$ (cf. Proposition~\ref{A2}),  and the definition of $\omega(t,y)$ in the expression for $p_t^0(x,y)$.\normal

\begin{proof}
Statement 1  can be easily derived by using  the upper bound (\ref{Psi1}) on $\Psi_t(x,y)$, the  upper estimates  on $p_{t}^0(x,y)$,  its space derivatives \eqref{ptder},  time derivatives (see  Proposition~\ref{p0der}), and the dominated convergence theorem.

To prove  statement 2, we first observe that the function
$$
[0,T]\ni t\mapsto \int_{\Re^d}p_{t}^0(\cdot,y)f(y)\, dy\in C_\infty(\Re^d)
$$
is continuous: the continuity at the point $t=0$ is provided by  Proposition \ref{H-bound-a}, and the continuity at all the other points easily  follows by the continuity of $p_{t}^0(x,y)$.  Then
$$
 \int_{\Re^d}p_{t+\epsilon}^0(x,y)f(y)\, dy\to \int_{\Re^d}p_{t}^0(x,y)f(y)\, dy, \quad \epsilon\to 0,
$$
uniformly in $t\in [0,T], x\in \rd$. This together with estimate \eqref{Psi1} and the dominated convergence theorem implies statement 2.

 The proof of statement 3
 is  a slight variation of the proof of statement 2 in Lemma \ref{cont}. Namely, by \eqref{Fel2} we have
$$
\sup_{x\in \rd} \Big|\int_\rd p_{t+\epsilon}^0(x,y)f(y)dy -f(x)\Big|\to 0, \quad t,\eps\to 0.
$$
Hence it is sufficient  to show that
 \begin{equation}\label{Fel33}
\sup_x \Big|\int_0^t \int_\rd \int_\rd \Psi_{t-s+\epsilon }(x,z)p_s^0(z,y)f(y)dydzds\Big| \to0, \quad t,\eps\to 0.
\end{equation}
Using     \eqref{Psi1},   the identity $p^0_t(x,y)=H_t^{(0)}(x,y)$ and the properties $H_t^{(\kappa)}(x,y)$, $H_t^{(0)}(x,y)$, one can verify this relation  similarly to \eqref{Fel3}.
 \end{proof}

Denote
\begin{equation}\label{qte20}
Q_{t,\eps} f(x)= \big(\partial_t-L_x\big) P_{t,\epsilon} f (x), \quad f\in C_\infty(\rd).
\end{equation}

\begin{lem}\label{l5}  For any  $f\in C_\infty(\rd)$  we have
\begin{enumerate}
  %\item
  %\begin{equation}\label{conv_loc0}
  %q_{t,\epsilon}(x,y)\to 0, \quad \epsilon\to 0,
  %\end{equation}
  %uniformly in  $(t,x,y)\in [\tau,T]\times \rd\times \rd $ for any $\tau>0$, $T>\tau$;

  \item
  \be\label{conv_loc}
Q_{t,\eps} f(x)\to 0, \quad \epsilon\to 0,
\ee
uniformly in  $(t,x)\in [\tau,T]\times \rd$ for    every    $\tau>0$, $T>\tau$;

\item
 \be\label{conv_int}
\int_0^t Q_{s,\eps} f(x)ds\to 0, \quad \epsilon\to 0,
\ee
uniformly in  $(t,x)\in [0,T]\times \rd$ for any $T>0$.
\end{enumerate}
\end{lem}
\begin{proof} We have
\begin{equation}\label{LP}
LP_{t,\eps}f(x)=L_x \int_\rd  p_{t+\epsilon}^0(x,y)f(y)dy + L_x\int_0^t \int_\rd\int_\rd p_{t-s+\epsilon}^0(x,z)\Psi_s(z,y)f(y)dydzds;
\end{equation}
note that $P_{t,\eps}f$  and both integrals in the right hand side are $C_\infty^2$-functions in $x$ (see Lemma \ref{aux-ep} for the first term; the argument for the second integral is the same). Hence the operator $L_x$ in \eqref{LP} is well applicable. We would like to interchange $L_x$ with the integrals in \eqref{LP}, i.e. to write \begin{equation}\label{LP1}
LP_{t,\eps}f(x)=\int_\rd  L_x p_{t+\epsilon}^0(x,y)f(y)dy + \int_0^t \int_\rd\int_\rd L_x p_{t-s+\epsilon}^0(x,z)\Psi_s(z,y)f(y)dydzds.
\end{equation}
Recall that $L$ is an integro-differential operator given by (\ref{symbol}), and observe that the argument based on the dominated convergence theorem allows us to interchange  the ``differential part'' $(b(x),\nabla)$ of this operator with the integrals in \eqref{LP}. To do the same with the ``integral part'' $a(x)L^{(\alpha)}$, recall that
$$
 L^{(\alpha)}f(x)=\lim_{\eps\to 0+}L^{(\alpha, \eps)}f(x), \quad L^{(\alpha, \eps)}f(x):=\int_{|u|>\eps}\Big(f(x+u)-f(x)\Big)\frac{c_\alpha}{|u|^{d+\alpha}}du,
  $$
and we interchange $a(x)L^{(\alpha, \eps)}$ with the integrals just using the Fubini theorem. On the other hand,
  $$\ba
 | L^{(\alpha)}f(x)-L^{(\alpha, \eps)}f(x)|&=\left|\int_{|u|\leq \eps}\Big(f(x+u)-f(x)-1_{|u|\leq 1}(u, \nabla f)\Big)\frac{c_\alpha}{|u|^{d+\alpha}}du\right|
 \\&\leq C\sup_{x\in \Re^d}|\nabla^2 f(x)|\int_{|u|\leq \eps}|u|^2 \frac{c_\alpha}{|u|^{d+\alpha}}du.
 \ea
  $$
The integrals in \eqref{LP} and the expressions under these integrals belong to $C_\infty^2(\rd)$ in $x$ and their second derivatives admit explicit bounds, cf. \eqref{ptder} and Lemma \ref{aux-ep}. Hence if we put in the right hand side of \eqref{LP} operators
$$
(b(x),\nabla_x)+a(x)L^{(\alpha, \eps)}_x,
$$
instead of $L_x$,  we get the expressions which tend to the original expressions as $\eps\to 0$. The same is true for  \eqref{LP1}, and    since   we  already have  proved that we can interchange $(b(x),\nabla_x)+a(x)L^{(\alpha, \eps)}_x$ with the integrals, we finally obtain \eqref{LP1}.

Similarly, using the differentiability of $p_{t}^0(x,y)$ in $t$ and the upper estimate on the respective derivatives (see Lemma~\ref{p0der} above), we derive
\begin{equation}\label{TP1}
\begin{split}
\prt_t P_{t, \epsilon}f(x)&=\int_\rd \prt_tp_{t+\epsilon}^0(x,y)f(y)dy + \int_0^t \int_\rd\int_\rd  \prt_t p_{t-s+\epsilon}^0(x,z)\Psi_s(z,y)f(y)dydzds\\
&\quad + \int_\rd\int_\rd p_{\epsilon}^0(x,z)\Psi_t(z,y)f(y)dydz.
\end{split}
\end{equation}
Since
$$
(L_x-\prt_t)p_t^0(x,y)=\Phi_t(x,y),
$$
combining \eqref{LP1} and \eqref{TP1} we derive
\begin{equation}\label{qte10}
\begin{split}
Q_{t,\epsilon}f(x)& =\int_\rd \int_\rd p_{\epsilon}^0(x,z)\Psi_t (z,y)f(y)dydz- \int_\rd \Phi_{t+\epsilon} (x,y)f(y)dy\\
&\quad -\int_0^t \int_\rd\int_\rd  \Phi_{t-s+\epsilon}(x,z) \Psi_s(z,y)f(y)dydzds.
\end{split}
\end{equation}
Since the  function $\Psi$ satisfies the equation
$$
\Phi_t(x,y)=\Psi_t(x,y)-\int_0^t\int_{\Re^d}\Phi_{t-s}(x,z)\Psi_s(z,y)\,dzds,
$$
we can rewrite $Q_{t,\epsilon}f(x)$ as follows:
$$\ba
Q_{t,\epsilon}f(x)& = \int_\rd \left(\int_\rd p_{\epsilon}^0(x,z)\Psi_t (z,y)dz-\Psi_{t+\epsilon} (x,y)\right)f(y)dy\\
&\qquad +\int_\rd \left(\int_t^{t+\epsilon} \int_\rd \Phi_{t-s+\epsilon} (x,z)\Psi_s(z,y)dzds\right) f(y)dy\\&=:Q_{t,\epsilon}^1f(x) +Q_{t,\epsilon}^2f(x).\ea
$$
By the uniform  continuity of  $\Psi$ on compact subsets of $(0,\infty)\times \rd\times \rd$ and estimate \eqref{Psi1},  we have
$$
\sup_{t\in [\tau, T], x\in \Re^d}\left|\int_{\Re^d}\Psi_{t+\epsilon} (x,y)f(y)\, dy-\int_{\Re^d}\Psi_{t} (x,y)f(y)\, dy\right|\to 0, \quad \eps\to 0.
$$
Using again the uniform continuity of $\Psi$ on compact subsets of  $(0,\infty)\times \rd\times \rd$, relation  \eqref{Fel2} and  estimate   (\ref{Psi1}), we obtain
$$
\sup_{t\in [\tau, T], x\in \Re^d\times \rd}\left|\int_{\Re^d} p_\eps^0(x,z)\Psi_{t} (z,y)\, dz-\Psi_{t} (x,y)\right|\to 0, \quad \eps\to 0,
$$
$$
\sup_{t\in [\tau, T], x\in \Re^d}\left|\int_{\Re^d}\int_{\Re^d}p_\eps^0(x,z)\Psi_{t} (z,y)f(y)\, dzdy-\int_{\Re^d}\Psi_{t} (x,y)f(y)\, dy\right|\to 0, \quad \eps\to 0.
$$
This proves  (\ref{conv_loc}) with   $Q_{t,\epsilon}^{1}f(x)$ instead of $Q_{t,\epsilon}f(x)$. By (\ref{Psi1})  we have
$$
|Q_{t,\epsilon}^{1}f(x)|\leq Ct^{-1+\zeta}, \quad \zeta=\delta\wedge (\kappa/\alpha),
$$
hence \eqref{conv_int}  for $Q_{t,\eps}^1f(x)$ easily follows from  (\ref{conv_loc}).

By  \eqref{Phi_bound}, \eqref{Psi1} and inequality  $H^{(0)}\leq H^{(\kappa)}$,  we obtain in the same way as in \eqref{21}
\begin{equation}\label{int-sing}
\begin{split}
\int_t^{t+\epsilon} \int_\rd |&\Phi_{t-s+\epsilon} (x,z)\Psi_s(z,y)|dzds   \leq C  H_{t+\epsilon}^{(\kappa)} (x,y)   \int_t^{t+\epsilon}   ((t+\epsilon-s)s)^{-1+\zeta}ds
\\&\leq C  H_{t+\epsilon}^{(\kappa)} (x,y)   t^{-1+\zeta}\int_t^{t+\epsilon}   (t+\epsilon-s)^{-1+\zeta}ds\leq C  \epsilon^{\zeta} t^{-1+\zeta} H_{t+\epsilon}^{(\kappa)}(x,y).
\end{split}
\end{equation}
This immediately  gives  (\ref{conv_loc})  and (\ref{conv_int}) with   $Q_{t,\epsilon}^{2}f(x)$ instead of $Q_{t,\epsilon}f(x)$:  we  multiply (\ref{int-sing}) by $|f(y)|$ and integrate it either with respect to  $dy$ (for \eqref{conv_loc}), or with respect to  $dyds$ (for \eqref{conv_int}).    %While doing that, one should also take into account the explicit form of the hull kernels $H_t(x,y)$, and that $f(y)$ vanishes at $\infty$ (this is simple, and hence we omit the details).
\normal
\end{proof}

\subsection{Positive maximum principle, applied to the approximate fundamental solution. Proof of Theorem~\ref{t2}}\label{s52}

The proof of Theorem~\ref{t2} will follow from Lemmae~\ref{posit} -- \ref{Ptint} given below.
\begin{lem}\label{posit} The operator  $P_t$ defined in \eqref{semi} is positivity preserving, i.e. $P_t f \geq 0 $ if $f\geq 0$.
\end{lem}

\begin{proof}
Take $f\in C_\infty(\rd)$, $f\geq 0$, and suppose that
\be\label{ass}
\inf_{t,x} P_t f(x)<0.
\ee
Then there exists $T>0$ such that
$$
\inf_{t\leq T,x\in \Re^d} P_t f(x)<0.
$$
Then by \eqref{conv_pte} there  exist $\eta>0, \theta>0, \eps_1>0$ such that
$$
\inf_{t\leq T,x\in \Re^d} \Big(P_{t,\eps} f(x)+\theta t\Big)<-\eta, \quad \eps<\eps_1.
$$
Denote
$$
u_{\epsilon}(t,x)= P_{t,\epsilon} f(x)+\theta t,
$$
and note that by \eqref{conv_x}
$$
u_{\epsilon}(t,x)\to \theta t>0, \quad |x|\to\infty,
$$
uniformly in $t\in [0, T]$. Hence the above infimum is in fact attained at some point   in $[0,T]\times \rd$\normal; in what follows we fix one such a point for each $\eps$, and denote it by   $(t_\eps, x_\eps)$.

Since $f(x)\geq 0$, by statement 2 of Lemma \ref{aux-ep} there exist $\eps_0>0$, $\tau>0$ such that
$$
P_{t,\eps} f(x)+\theta t\geq -{\eta\over 2}, \quad t\leq \tau, \quad \eps<\eps_0, \quad x\in \Re^d.
$$
   Since
$$
u_{\epsilon}(t_\eps,x_\eps)=\min_{t\in [0, T], x\in \Re^d}u_\eps(t,x)<-\eta<-{\eta\over 2},
$$
we have   $t_\eps>\tau$ as soon as $\eps<\eps_0$.

The operator $L$ satisfies the positive maximum principle; that is, if  whenever $f\in D(L)$, and  $f(x_0)\geq 0$ where $x_0=\arg\max f(x)$, then $Lf(x_0)\leq 0$, cf. \cite[Ch.~4.2]{EK86}. Therefore
$$
L_x u_\eps(t,x)|_{(t,x)=(t_\eps,x_\eps)}\geq 0.
$$
In addition,  for $\eps<\eps_0$ we always have
$$
\prt_t u_\eps(t,x)|_{(t,x)=(t_\eps,x_\eps)}\leq 0,
$$
    where    the  sign ``$<$''  may appear only  if $t_\eps=T$. \normal%, and because we have excluded another ``boundary case'' $t_\eps=\tau$, the inverse inequality is impossible.

Then
\be\label{PMP}
(\prt_t-L_x) u_\eps(t,x)|_{(t,x)=(t_\eps,x_\eps)}\leq 0.
\ee
On the other hand,    since    $t_\eps\in [\tau, T]$, $\eps<\eps_0$, we have  by the first  statement of  Lemma \ref{l5}
$$
(\prt_t-L_x) u_\eps(t,x)|_{(t,x)=(t_\eps,x_\eps)}=\theta +Q_{t_\eps,\eps} f(x_\eps)  \to \theta>0, \quad \eps\to 0.
$$
This gives contradiction and shows that \eqref{ass} fails.
\end{proof}

\begin{lem}\label{semigr}
The family of operators possesses the semigroup property: $P_{t+s} = P_sP_t$.

\end{lem}
\begin{proof} The proof is based on the same argument as the proof of Lemma \ref{posit}, hence we just sketch it. Take $f\in C_\infty(\rd)$ and assume, for instance,  that
\be\label{ass1}
P_{t+s}f(x)-P_tP_sf(x)<0
\ee
for some $s,t>0, x\in \Re^d$. Fix this $s$, and observe that then for some $\eta>0, \theta>0, T>0$ the function
$$
u_{\eps}(t,x)=P_{t+s, \eps}f(x)-P_{t, \eps}P_{s}f(x)+\theta t
$$
satisfies
$$
\inf_{t\leq T,x\in \Re^d}u_{\eps}(t,x)<-\eta.
$$
In addition, $u_\eps(t,x)\to \theta t,|x|\to \infty, $ hence the infimum is attained at some point $(t_\eps, x_\eps)$. Finally,
$u_\eps(t,x)\to 0$, $t,\eps\to 0$ and therefore there exist $\tau>0, \eps_0>0$ such that $t_\eps>\tau$,    provided that $ \eps\in (0,\eps_0)$\normal.
Then, on one hand, \eqref{PMP} holds, and
 on the other hand
$$
(\prt_t-L_x) u_\eps(t,x)|_{(t,x)=(t_\eps,x_\eps)}=Q_{t_\eps+s,\eps} f(x_\eps) - Q_{t_\eps,\eps} P_{s}f(x_\eps)+\theta\to \theta>0, \quad \eps\to 0.
$$
This gives the contradiction and proves that (\ref{ass1}) is impossible. Changing $f$ to $-f$, we see that    inequality \eqref{ass1} with ``$>$'' instead of ``$<$''    is impossible as well, which completes the proof.
\end{proof}

\begin{lem}\label{Ptint}
We have
\begin{itemize}

  \item[a)]
 \begin{equation}\label{Dy1}
P_tf(x)-f(x)=\int_0^tP_sLf(x)\, ds, \quad f\in C_\infty^2(\rd);
\end{equation}

\item[b)]  $$ P_t 1 =1. $$

\end{itemize}

\end{lem}
\begin{proof} We apply the same argument as in the above lemmas. Take $f\in C_\infty^2(\Re^d)$, and assume that    for every $t>0$, $x\in \rd$\normal,
\be\label{ass2}
P_{t}f(x)<f(x)+\int_0^tP_{s}Lf(x)\, ds.
\ee
Then repeating  the above argument, we obtain the functions
 $$
u_{\eps}(t,x)= P_{t, \eps}f(x)-f(x)-\int_0^tP_{s,\eps}Lf(x)\, ds+\theta t, \quad \eps>0
$$
and the points $(t_\eps, x_\eps)$, in which  these functions attain their minima on $[0,T]\times \Re^d$, such that
for some $\tau>0,\eps_0>0$ we have $t_\eps>\tau$,    provided before $ \eps\in (0,\eps_0)$.\normal

On one hand, for these functions we have \eqref{PMP}. On the other hand,
 $$\ba
(\prt_t-L_x) u_\eps(t,x)&= Q_{t, \eps}f(x)+Lf(x)-P_{t,\eps}Lf(x)+\int_0^t LP_{s,\eps}Lf(x)\, ds+\theta
\\& =Q_{t, \eps}f(x) +Lf(x)-P_{t,\eps}Lf(x)+\int_0^t\Big(\prt_sP_{s,\eps}Lf(x)-Q_{s,\eps}Lf(x)\Big)\, ds+\theta
\\&=Q_{t, \eps}f(x) -\int_0^tQ_{s,\eps}Lf(x)\, ds+\theta.
\ea
$$
Then by Lemma \ref{l5}
$$
(\prt_t-L_x) u_\eps(t,x)|_{(t,x)=(t_\eps,x_\eps)}\to \theta>0, \quad \eps\to 0,
$$
which gives contradiction to \eqref{PMP} and disproves (\ref{ass2}). Changing $f$ to $-f$, we complete the proof of statement a).

 To prove statement b),  take $f\in C^2_\infty(\Re^d)$ such that $f\equiv 1$ on the unit ball in $\Re^d$, and put $f_k(x)=f(k^{-1}x)$. Then
$$
f_k(x)\to 1, \quad Lf_k(x)\to 0, \quad k\to \infty,
$$
and both $|f_k|$ and $|Lf_k|$ are bounded by some constant, independent of $k$. Then using the equality $p_t^0(x,y)=H_t^{(0)}(x,y)$, the estimate (\ref{r_bound}) on the remainder $r_r(x,y)$, and Proposition~\ref{H1},  we can apply  the dominated convergence theorem and  pass to  the limit in  \eqref{Dy1} as $k\to \infty$ in  (\ref{Dy1}) with $f_k$ instead $f$. This finishes the proof of statement b).
\end{proof}

\begin{proof}[Proof of  Theorem \ref{t2}.]

By Lemmas~\ref{posit}, \ref{semigr} and  the second statement of Lemma~\ref{Ptint},  the family of operators $\{P_t, t\geq0\}$ forms a strongly continuous contraction semigroup    on $C_\infty(\rd)$\normal, which is positivity preserving.  Since the semigroup $\{P_t, t\geq0\}$ possesses the continuous transition probability density $p_t(x,y)$, the respective Markov process $X$ is strong Feller.  Finally, the first  statement of Lemma~\ref{Ptint} implies that the restriction of the generator of $\{P_t, t\geq0\}$ coincides with $L$ on functions from $C_\infty^2(\rd)$.

\end{proof}

\subsection{The generator of the semigroup $(P_t)_{t\geq0}$: Proofs of Theorem~\ref{t3} and Proposition \ref{p1}}\label{s53}

In Lemma~\ref{Ptint} we proved that
$(L, C_\infty^2 (\rd))$ is the restriction of $(A, D(A))$. Since $A$ is a closed operator, this yields that $(L, C_\infty^2 (\rd))$ is closable. Let us show that its closure coincides with  $(A, D(A))$.

Take $f\in C_\infty(\Re^d)\cap D(A)$.
Fix $t>0$, and consider the functions  $P_tf$ and $P_{t,\eps} f$.  Since $f\in D(A)$, then $P_t  f\in D(A)$, and
 \begin{equation}\label{Pta}
 AP_t f=\partial_t P_tf.
 \end{equation}
  Recall that by statement 1 of Lemma \ref{aux-ep} we have $P_{t,\eps}f \in C_\infty^2(\rd)$ and thus by Lemma~\ref{Ptint} $P_{t,\eps}f \in D(A).$ Hence,
     $$
  AP_{t, \eps} f=L P_{t,\eps}f=\partial_t P_{t,\epsilon}f.
  $$
  Observe that
  \begin{itemize}

  \item    By statement 2 of  Lemma \ref{aux-ep},   one has $P_{t,\eps}f\to P_tf$ in $C_\infty(\Re^d)$ as $\eps\to 0$;

  \item  By  statement 1 of  Lemma \ref{l5},  one has $(\prt_t-L)P_{t,\eps}f\to 0$ in $C_\infty(\Re^d)$ as  $\eps\to 0$.
\end{itemize}

Assuming  that we know
\begin{equation}\label{dPte}
\prt_t P_{t,\eps}\to \prt_tP_t f\quad \text{  in $C_\infty(\Re^d)$ as $\eps\to 0$},
\end{equation}
 we derive
$$
LP_{t,\eps}f\to AP_tf\quad \text{in $C_\infty(\Re^d)$ as $\eps\to 0$},
$$
 which implies that $P_tf $ belongs to the domain of the $C_\infty(\Re^d)$-closure of $(L, {C}_\infty^2 (\rd))$.   Consequently, this closure coincides with  $(A, D(A))$.

We have proved   Theorem \ref{t3} under the assumption \eqref{dPte}. We verify this assumption  in Lemma~\ref{dptx} below.

We also show in Lemma~\ref{dptx}    that\normal
\begin{equation}\label{dpte}
\partial_t p_{t,\eps}(x,y)\to \prt_tp_t(x,y), \quad \eps\to 0,
\end{equation}
uniformly on compact subsets of $(0,\infty)\times \rd\times \rd$.  Using properties i), ii) and iv) of the function $p_{t,\epsilon}(\cdot,y)\in C_\infty^2(\rd)$ (cf. Subsection~\ref{apr}), and applying to this function
literally the same argument used in the case of  $P_{t,\epsilon} f(\cdot)$, we derive that  $p_t(\cdot,y)\in D(A)$, and  $p_t(x,y)$ is a fundamental solution to the Cauchy problem for $\partial_t-A$.
\qed

\section{Proof of Theorem \ref{tweak}}\label{sweak}

   Let $X$ be the canonical Markov process    which corresponds to the semigroup    constructed in Theorem~\ref{t2}\normal.
Using the Markov property of $X$, it is easy to deduce from \eqref{Dy1}  and the  semigroup property for $p_t(x,y)$ the following: For given $f\in C_\infty^2(\Re^d)$, $t_2>t_1$, and $x\in\Re^d$, for any $m\geq 1$, $r_1, \dots r_m\in [0, t_1]$, and bounded measurable $G:(\Re^d)^m\to \Re$ the identity
$$
\E_x\left[f(X_{t_2})-f(X_{t_2})-\int_{t_1}^{t_2}h_f(X_s)\, ds\right]G(X_{r_1}, \dots, X_{r_m})=0
$$
holds true.   Here and below  $\E_x $ denotes the expectation with respect to the law of the underlying process,  starting at $x\in \rd$\normal. This means that for every $f\in C_\infty^2(\Re^d)$ the process
\be\label{Mf}
M^f_t=f(X_t)-\int_{0}^{t}h_f(X_s)\, ds, \quad t\geq 0
\ee
is a $\P_x$-martingale for every $x\in \Re^d$; that is,    $\P$   is a solution to the martingale problem for $(L, C_\infty^2(\rd))$.

 Operator $(L,C_\infty^2(\rd))$ is dissipative, which follows from the positive maximum principle, see \cite[Lem.~4.2.1]{EK86}, or \cite[Lem.~4.5.2]{Ja01}.  Next, its closure equals to the generator $A$ of $C_\infty(\Re^d)$-semigroup $\{P_t, t\geq 0\}$, hence for every $\lambda>0$ the range of the resolvent $(\lambda -L)^{-1}$  in $C_\infty(\rd)$ is dense. Hence the required  uniqueness of the solution to the martingale problem $(L,C_\infty^2(\rd))$ follows by \cite[Thm.~4.4.1]{EK86}.

 It follows  from the It\^o formula, that    every \normal   weak solution  to (\ref{SDE}) is a solution to the martingale problem for $(L,C_\infty^2(\rd))$. Since we have already proved that this martingale problem is well posed, this immediately proves the \emph{uniqueness} of the weak solution to (\ref{SDE}). The proof of the \emph{existence} of a weak solution  can be conducted in a standard way, which we outline below.
 \begin{itemize}
   \item Consider the family of equations
   \begin{equation}\label{Xn10}
   dX_t^{(n)} =  b_n (X_t^{(n)})dt + \sigma_n (X_{t-}^{(n)}) d Z_t^{(\alpha)}.
   \end{equation}
   with smooth coefficients $b_n$ and $\sigma_n$,   approximating the coefficients  $b$ and $\sigma$ of (\ref{SDE}). We can choose the approximations $b_n$ and $\sigma_n$ such that the functions $a_n=(\sigma_n)^\alpha, n\geq 1$ and $b_n, n\geq 1$ the constants satisfy \eqref{a_bdd_Hol} and \eqref{b-bdd-Hol} with the same constants.  Since the coefficients in \eqref{Xn10} are smooth, there exists a (strong) solution $X_t^{(n)}$ to \eqref{Xn10}. This solution is  a strong Markov process, admitting the transition probability density $p_t^{(n)}(x,y)$. Note that under our  assumption on the coefficients  the upper bound for the residue term in Theorem~\ref{t1} can be achieved uniformly in $n$, and consequently
    \begin{equation}\label{ptn}
    p_t^{(n)}(x,y)\leq  \frac{C}{t^{d/\alpha}}  g^{(\alpha)}\left(\frac{\omega_n(t,y)-x}{t^{1/\alpha}}\right), \quad t\in (0,T], \quad x,y\in \rd,
    \end{equation}
    where the constant $C>0$ is independent of $n$, and $\omega_n(t,y)$ is given by \eqref{abc} with $b$ replaced by $b_n$, and the flow $\theta_t$ replaced by the corresponding flow.

   \item We show that
       \begin{equation}\label{EK10}
         \Ee_x\normal \left[ |(X_{t+s}^{(n)}, Z_{t+s}^{(\alpha)})-(X_t^{(n)}, Z_t^{(\alpha)})|^\beta|\mathcal{F}_t\right] \leq \rho(s),
       \end{equation}
       where $\mathcal{F}_t:=\sigma\{ Z_s^{(\alpha)},\, s\leq t\}$,  the non-random function $\rho(s)$ tends to $0$ as $s\to 0$,
       $\beta\in (0,\alpha)$ is some constant.   By Theorem~8.6  and Remark~8.7 from \cite[Ch.~3]{EK86}  we deduce that the  sequence $(X^{(n)}, Z^{(\alpha)})$ is weakly compact in $\DD(\ax, (\Re^{d})^2)$.

       Using the inequality $(u+v)^{\beta}\leq 2^{\beta-1} (u^\beta+ v^\beta)$ for positive $u$, $v$ and $\beta$,  and the  Markov property of $(X_t^{(n)},Z_t^{(\alpha)})$ we derive
       \begin{align*}
       \Ee_x &\Big[ |(X_{t+s}^{(n)}, Z_{t+s}^{(\alpha)})-(X_t^{(n)}, Z_t^{(\alpha)})|^\beta|\mathcal{F}_t\Big]
       \\&= \Ee_{(X_t^{(n)},Z_t^{(\alpha)})}\Big[\left|(X_s^{(n)},Z_s^{(\alpha)})-(X_0^{(n)}, Z_0^{(\alpha)})\right|^\beta\Big]\\
       &=
       \Ee_{(X_t^{(n)},Z_t^{(\alpha)})}\Big[\Big(|X_s^{(n)}-X_0^{(n)}|^2+|Z_s^{(\alpha)}- Z_0^{(\alpha)})|^2\Big)^{\beta/2}\Big]\\
       &\leq 2^{\beta/2-1}\Big( \Ee_{X_t^{(n)}}\Big[|X_s^{(n)}-X_0^{(n)}|^\beta\Big] +  \Ee_{Z_t^{(\alpha)}}\Big[|Z_s^{(\alpha)}-Z_0^{(\alpha)}|^\beta\Big]\Big)\\
              &\leq C\sup_x  \int_\rd |y-x|^\beta \frac{1}{s^{d/\alpha}}  g^{(\alpha)}\left(\frac{\omega_n(s,y)-x}{s^{1/\alpha}}\right)dy + C  \int_\rd  |z|^\beta
        \frac{1}{s^{d/\alpha}}  g^{(\alpha)}\left(\frac{z}{s^{1/\alpha}}\right)dz.
       \end{align*}
     We have
     $$
     \int_\rd  |z|^\beta
        \frac{1}{s^{d/\alpha}}  g^{(\alpha)}\left(\frac{z}{s^{1/\alpha}}\right)dz=cs^{\beta/\alpha}.
     $$
    On the other hand, we can decompose $y-x=(\omega_n(s,y)-x)-(\omega_n(s,y)-y)$, and   use the inequality     $|\omega_n(s,y)-y|\leq Cs$,     since    the sequence $b_n(\cdot), n\geq 1$ is uniformly bounded.  Now simple calculation  finally gives \eqref{EK10} with $\rho(s)=C(s^{\beta/\alpha}+s^\beta)$.

   \item By the weak compactness of the sequence $(X^{(n)}, Z^{(\alpha)})$, there exists a weak limit  $(\tilde X, \tilde Z^{(\alpha)})$ of   $(X^{(n)}, Z^{(\alpha)})$.  By   \cite[Thm.~2.2, Rem.~2.5]{KP91}, this weak limit is a  weak solution to (\ref{SDE}).
 \end{itemize}

\qed

\section{Time derivative  of $p_t(x,y)$. Proof of Theorem~\ref{t5} }\label{s3}

\subsection{Outline}

Our goal in this section is  to  prove the existence of the time derivative $\prt_tp_t(x,y)$, and to give estimates for this derivative. We begin with the outline of our approach, and indicate the  main difficulties.

We would like to extend the properties of $p_t^0(x,y)$ stated in Proposition~\ref{p0der} to similar properties of $p_t(x,y)$. We have the integral representation
$$
p_t(x,y)=p_t^0(x,y)+\int_0^t\int_{\Re^d}p_{t-s}^0(x,z)\Psi_s(z,y)\,dzds,
$$
which is just  another form of \eqref{sol_1}; cf. \eqref{Psi}. However, for the required extension we cannot use this representation, because the upper bound for $|\prt_tp_{t-s}^0(x,z)|$  has a non-integrable singularity $(t-s)^{-1}\vee (t-s)^{-1/\alpha}$ at the point $s=t$. Therefore we rewrite the integral representation for $p_t(x,y)$ in the following way:
 \be\label{46}\ba
p_t(x,y)&
%=p_t^0(x,y)+\int_0^{t/2}\int_{\Re^d}p_{t-s}^0(x,z)\Psi_s(z,y)\,dz\, ds+\int_{t/2}^t\int_{\Re^d}p_{t-s}^0(x,z)\Psi_s(z,y)\,dz\, ds\\&
=p_t^0(x,y)+\int_0^{t/2}\int_{\Re^d}p_{t-s}^0(x,z)\Psi_s(z,y)\,dzds+\int_{0}^{t/2}\int_{\Re^d}p_{s}^0(x,z)\Psi_{t-s}(z,y)\,dz\, ds.
\ea
\ee
Using this representation, we avoid the annoying  singularities related to $p_t^0(x,y)$, but instead we have to establish the differential properties of $\Psi$ with respect to  the time variable. For this we proceed in the way similar to that used in Section \ref{s21}: first we establish the required  properties for $\Phi$, then for its convolutions, and finally for $\Psi$. The minor difficulty which arises  is that  in case \textbf{C} the function $b(x)$ is not supposed to be from the class  $C^1_b(\rd)$, and therefore $\Phi_t(x,y)$ is not continuously differentiable in  $t$. This difficulty is of completely technical nature, and is resolved by choosing a suitable formulation for  differentiability property of $\Phi_t(x,y)$ and its convolutions.

\qed

\subsection{Time derivatives of $\Phi$, $\Phi^{\star k}$ and $\Psi$. Proof of the convergence in   \eqref{dPte} and \eqref{dpte}. }\label{timeder}

Consider first the following ``smooth'' case.
\begin{lem}\label{deriv_Phi} Assume that either  one of cases \textbf{A} or \textbf{B} of Theorem \ref{t1} holds true, or  case \textbf{C} of Theorem \ref{t1} holds true with an additional assumption $b\in C^1_b(\Re^d)$. Then the  statements below hold true.

\begin{enumerate}
 \item   Function $\Phi_t(x,y)$ defined by (\ref{Phi})  possesses the derivative
$
\prt_t\Phi_t(x,y),
$ which is  continuous on $(0,\infty)\times\Re^d\times\Re^d$.
   \item For any $\kappa\in (0, \eta\wedge \alpha)$ and $T>0$, the derivative $\prt_t\Phi_t(x,y)$ possesses the bound
   \begin{equation}\label{phider1}
   |\prt_t\Phi_t(x,y)|\leq C\Big(t^{-1}\vee t^{-1/\alpha}\Big)\Big(t^{-1+\kappa/\alpha} H_t^{(\kappa)}+ t^{-1+\delta} H_t^{(0)}(x,y)\Big),\quad  t\in(0, T], \quad x,y\in \Re^d.
   \end{equation}
\end{enumerate}
\end{lem}

\begin{proof}  We give the calculations for the case \textbf{C} only;  the other cases are similar and simpler.  Statement 1 follows directly from the explicit formula  (\ref{Phi_c}). To prove statement 2 we estimate separately the derivatives of $\Phi_t^1(x,y)$, $\Phi_t^2(x,y)$ in (\ref{Phi_c}). We have
$$\ba
|\prt_t\Phi_t^1(x,y)|&\leq C\Big|a(x)-a(y)\Big|\bigg\{{1\over t^{d/\alpha+2}}\Big|(L^{(\alpha)} g^{(\alpha)})\left({\theta_t(y)-x\over t^{1/\alpha}a^{1/\alpha}(y)}\right)\Big|\\&\hspace*{1cm}+{1\over t^{d/\alpha+1+1/\alpha}}\Big|(\nabla L^{(\alpha)} g^{(\alpha)})\left({\theta_t(y)-x\over t^{1/\alpha}a^{1/\alpha}(y)}\right)\Big|\\&\hspace*{1cm}+{1\over t^{d/\alpha+2}}\Big|(\nabla L^{(\alpha)} g^{(\alpha)})\left({\theta_t(y)-x\over t^{1/\alpha}a^{1/\alpha}(y)}\right)\Big|\left|{\theta_t(y)-x\over t^{1/\alpha}}\right|\bigg\},
\ea
$$
because $\prt_t\theta_t(y)=-b(\theta_t(y))$, which is bounded. Applying  (\ref{a_hol_in_c}) with $\kappa\in (0, \eta\wedge \alpha)$, we get by  (\ref{g_a_frac}),  (\ref{g_a_frac_der}), and \eqref{G1}--\eqref{G3}  the estimates
\begin{align*}
|\prt_t\Phi_t^1(x,y)|&\leq C \Big|a(y)-a(x)\Big|
 \Big({1\over t^{d/\alpha+2}} +  {1\over t^{d/\alpha+1+1/\alpha}} \Big)g^{(\alpha)}\left({\theta_t(y)-x\over t^{1/\alpha}a^{1/\alpha}(y)}\right)\\
 & \leq
 C\Big(t^{-2}+t^{-1-1/\alpha} \Big)\Big(t^{\kappa/\alpha} H_t^{(\kappa)}(x,y) +t^{\kappa} H_t^{(0)}(x,y)\Big)\\
&\leq C (t^{-1}\vee t^{-1/\alpha}) \Big( t^{-1+\kappa/\alpha}H_t^{(\kappa)}(x,y)+t^{-1+\kappa} H_t^{(0)}(x,y)\Big).
\end{align*}

 Similarly,
$$\ba
|\prt_t\Phi_t^2(x,y)|&\leq Ct^{-d/\alpha-1/\alpha}\Big|(\nabla g^{(\alpha)})\left({\theta_t(y)-x\over t^{1/\alpha}a^{1/\alpha}(y)}\right)\Big|\\
&+Ct^{-d/\alpha-1/\alpha-1}\Big|b(\theta_t(y))-b(x)\Big|\Big|(\nabla g^{(\alpha)})\left({\theta_t(y)-x\over t^{1/\alpha}a^{1/\alpha}(y)}\right)\Big|\\ &+Ct^{-d/\alpha-1/\alpha-1}\Big|b(\theta_t(y))-b(x)\Big|\Big|(\nabla^2 g^{(\alpha)})\left({\theta_t(y)-x\over t^{1/\alpha}a^{1/\alpha}(y)}\right)\Big|\left|{\theta_t(y)-x\over t^{1/\alpha}}\right|\\
 &+Ct^{-d/\alpha-2/\alpha}\Big|b(\theta_t(y))-b(x)\Big|\Big|(\nabla^2 g^{(\alpha)})\left({\theta_t(y)-x\over t^{1/\alpha}a^{1/\alpha}(y)}\right)\Big|,
\ea
$$
where we have used that $\nabla b$  and $\partial_t \theta_t (y)$ are bounded. Therefore,  using the Lipschitz condition for $b$ and (\ref{g_a_der}), (\ref{g_a_der2}), we can write a shorter (and less precise) estimate
$$
|\prt_t\Phi_t^2(x,y)|\leq C(t^{-1}\vee t^{-1/\alpha}) H_t^{(0)}(x,y),\quad t\in (0,T], \, x,y\in \rd,
$$
which combined with the estimate for $\Phi^1_t(x,y)$ completes the proof.
\end{proof}

\begin{lem} Under the condition of Lemma \ref{deriv_Phi}, the following statements  hold true.

\begin{enumerate}
 \item   The functions $\Phi_t^{\star k}(x,y)$ and $\Psi_t(x,y)$, defined by (\ref{Phi})  have  derivatives
$$
\prt_t\Phi_t^{\star k}(x,y),\quad \prt_t\Psi_t(x,y),\quad t>0, \quad x,y\in \Re^d,
$$
 continuous on  $(0,\infty)\times\Re^d\times\Re^d$.

   \item For any $\kappa\in (0, \eta\wedge \alpha)$ and $T>0$ there exist constants $C_3, C_4$ and $C_5$  such that
   \be\label{42}
   |\prt_t\Phi^{\star k}_t(x,y)|\leq {C_3 C_4^k \over \Gamma(k\zeta)}\Big(t^{-1}\vee t^{-1/\alpha}\Big) t^{-1+(k-1)\zeta}\Big( t^{\kappa/\alpha}H_t^{(\kappa)}(x,y)+ t^{\delta}H_t^{(0)}(x,y)\Big),
   \ee
    \be\label{43}
   |\prt_t\Psi_t(x,y)|\leq C_5\Big(t^{-1}\vee t^{-1/\alpha}\Big) \Big( t^{\kappa/\alpha}H_t^{(\kappa)}(x,y)+ t^{\delta}H_t^{(0)}(x,y)\Big),
   \ee
  for all  $t\in (0,T]$, $x,y\in \rd$.
 \end{enumerate}
\end{lem}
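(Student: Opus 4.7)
I will prove (\ref{42}) by induction on $k$, with base case $k = 1$ supplied by Lemma~\ref{deriv_Phi}. The inductive step proceeds from the recursion $\Phi^{\star(k+1)}_t(x,y) = \int_0^t(\Phi^{\star k}_{t-s}*\Phi_s)(x,y)\,ds$. \textbf{The main obstacle} is that one cannot differentiate this expression directly under the integral: both endpoints carry singularities. Near $s = 0$ the factor $\Phi_s$ blows up as $s^{-1+\delta}$ (which is only just integrable), while near $s = t$ the candidate derivative $\partial_t\Phi^{\star k}_{t-s}$ would acquire an extra $(t-s)^{-1}\vee(t-s)^{-1/\alpha}$ factor coming from the inductive hypothesis, and this is \emph{not} integrable near $s = t$. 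The resolution is to split the convolution at $s = t/2$ and apply the substitution $u = t - s$ on the second half, writing
\ben
\Phi^{\star (k+1)}_t = \int_0^{t/2}(\Phi^{\star k}_{t-s}*\Phi_s)\,ds + \int_0^{t/2}(\Phi^{\star k}_u*\Phi_{t-u})\,du.
\een
In each piece the factor whose time-argument depends on $t$ is evaluated at a point $\geq t/2$, safely away from its singularity, so that honest differentiation under the integral is permissible. Collecting the two coinciding boundary contributions at $s = u = t/2$, this yields
\ben
\partial_t\Phi^{\star (k+1)}_t = (\Phi^{\star k}_{t/2}*\Phi_{t/2}) + \int_0^{t/2}(\partial_t\Phi^{\star k}_{t-s})*\Phi_s\,ds + \int_0^{t/2}\Phi^{\star k}_u*(\partial_t\Phi_{t-u})\,du.
\een

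\textbf{Estimation.} To each of the three pieces I will apply the sub-convolution property of $H_t(x,y)$ in the $z$-variable (Proposition~\ref{h-conv}) together with the bound on $\Phi^{\star k}$ from Lemma~\ref{l1} and the derivative bound supplied by either the induction hypothesis or Lemma~\ref{deriv_Phi}. For the boundary piece, the inequality $(t/2)^{-2+(k+1)\delta}\leq 4(t^{-1}\vee t^{-1/\alpha})\,t^{-1+(k+1)\delta}$ on $(0,T]$ delivers exactly the shape prescribed by (\ref{42}). For the two integral pieces I exploit $(t-s)^{-1}\vee(t-s)^{-1/\alpha}\leq c(t^{-1}\vee t^{-1/\alpha})$ on $[0, t/2]$ to pull the time-singularity out of the integral, and the remaining integral is evaluated via the Beta identity
\ben
\int_0^t(t-s)^{-1+k\delta}s^{-1+\delta}\,ds = \frac{\Gamma(k\delta)\Gamma(\delta)}{\Gamma((k+1)\delta)}\,t^{-1+(k+1)\delta}.
\een
The boundary piece produces $1/\Gamma(k\delta)$ rather than $1/\Gamma((k+1)\delta)$ in the denominator; the mismatch factor $\Gamma((k+1)\delta)/\Gamma(k\delta) = O(k^\delta)$ grows only polynomially and is harmlessly absorbed into the geometric constant $C_2^{k+1}$. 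Continuity of $\partial_t\Phi^{\star(k+1)}_t$ on $(0,\infty)\times\Re^d\times\Re^d$ follows from the same representation via dominated convergence, using the just-established bounds as majorants and the joint continuity of $\Phi$, $\partial_t\Phi$, and (by induction) $\Phi^{\star k}$, $\partial_t\Phi^{\star k}$.

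\textbf{Passage to $\Psi$.} Finally, (\ref{42}) yields that $\sum_{k\geq 1}|\partial_t\Phi^{\star k}_t|$ is dominated by a series of Mittag--Leffler type which converges uniformly on compact subsets of $(0,\infty)\times\Re^d\times\Re^d$; termwise differentiation of $\Psi = \sum_k\Phi^{\star k}$ is therefore legitimate, and $\partial_t\Psi$ inherits joint continuity. Factoring out $(t^{-1}\vee t^{-1/\alpha})\,t^{-1+\delta}H_t(x,y)$, the residual series $\sum_{k\geq 1}C_1 C_2^k\,t^{(k-1)\delta}/\Gamma(k\delta)$ is bounded uniformly on $(0,T]$ because $1/\Gamma(k\delta)$ decays super-exponentially, and (\ref{43}) follows at once.
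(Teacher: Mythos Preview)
Your proof is correct and follows essentially the same approach as the paper: split the convolution at $t/2$, differentiate to obtain the boundary term $(\Phi^{\star k}_{t/2}*\Phi_{t/2})$ plus two integrals in which the $t$-dependent factor lives on $[t/2,t]$, pull out $(t-s)^{-1}\vee(t-s)^{-1/\alpha}\leq c\,(t^{-1}\vee t^{-1/\alpha})$, and close the induction via the Beta integral exactly as in Lemma~\ref{l1}. Your write-up is in fact more detailed than the paper's sketch, in particular your explicit handling of the boundary term's $\Gamma(k\delta)$ versus $\Gamma((k+1)\delta)$ mismatch and its absorption into the geometric constant.
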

\begin{proof} Since the proof is similar to that  of Lemma \ref{lH10}, we only  sketch the argument. Let $C_1, C_2$ be that same as in \eqref{Phik} (see also \eqref{const}). We show that \eqref{42} holds true with
$$
C_3= \frac{C\Gamma(\zeta)}{C_4}\vee\frac{CC_1}{C_{\Phi,T}}\vee \frac{C_1}{\Gamma(\zeta)},\quad C_4= 9(2\vee 2^{1/\alpha})  (T\vee 1)  C_{\Phi,T}C_{H,T}\Gamma(\zeta),
$$\normal where the constant $C>0$ is  the one from  \eqref{phider1}.

 Split
\be\label{44}
\Phi^{\star (k+1)}_t(x,y)=\int_0^{t/2}\int_{\Re^d}\Phi_{t-s}^{\star k}(x,z)\Phi_s(z,y)\,dz ds+\int_0^{t/2}\int_{\Re^d}\Phi_{s}^{\star k}(x,z)\Phi_{t-s}(z,y)\,dz ds.
\ee
By induction, it can be  shown   that  each $\Phi^{\star k}_t(x,y)$ is continuously differentiable in  $t$, and
\be\label{45}\ba
\prt_t\Phi^{\star (k+1)}_t(x,y)&=\int_0^{t/2}\int_{\Re^d}(\prt_t\Phi^{\star k})_{t-s}(x,z)\Phi_s(z,y)\,dz ds+
\int_0^{t/2}\int_{\Re^d}\Phi_{s}^{\star k}(x,z)(\prt_t\Phi)_{t-s}(z,y)\,dz ds\\&+\int_{\Re^d}\Phi_{t/2}^{\star k}(x,z)\Phi_{t/2}(z,y)\, dz.
\ea
\ee
Observe that
$$
(t-s)^{-1}\vee (t-s)^{-1/\alpha}\leq  \big(2\vee 2^{1/\alpha}\big)\Big(t^{-1}\vee t^{-1/\alpha}\Big), \quad s\in (0, t/2).
$$
Using this inequality and   pulling out from the integrals the term  $\big(2\vee 2^{1/\alpha}\big)\big(t^{-1}\vee t^{-1/\alpha}\big)$, we get by the induction assumption
\begin{align*}
\Big|&\int_0^{t/2}\int_{\Re^d}(\prt_t\Phi^{\star k})_{t-s}(x,z)\Phi_s(z,y)\,dz ds\Big|\leq
\frac{C_3 C_4^k C_{\Phi,T} ( 2\vee 2^{ 1/\alpha})}{\Gamma(\zeta k)}  (t^{-1}\vee t^{-1/\alpha}\big) \\
&\quad \times \int_0^t \int_\rd (t-s)^{-1+(k-1)\zeta}\Big( (t-s)^{\kappa/\alpha}H_{t-s}^{(\kappa)}(x,z)+ (t-s)^\delta H_{t-s}^{(0)}(x,z)\Big)\\
&\quad \times \Big( s^{\kappa/\alpha}H_s^{(\kappa)}(z,y)+ s^\delta H_s^{(0)}(z,y)\Big)dzds\\
&\leq \frac{3    (T\vee 1)\normal C_3 C_4^k C_{\Phi,T} C_{H,T}( 2\vee 2^{ 1/\alpha})}{\Gamma(\zeta k)}  (t^{-1}\vee t^{-1/\alpha}\big) \Beta(\zeta k,\zeta)t^{-1+k\zeta} \Big( t^{\kappa/\alpha}H_t^{(\kappa)}(x,y)+ t^\delta H_t^{(0)}(x,y)\Big)\\
&= \frac{3   (T\vee 1) \normal C_3 C_4^k C_{\Phi,T} C_{H,T}( 2\vee 2^{ 1/\alpha})\Gamma(\zeta)}{\Gamma(\zeta (k+1))}  (t^{-1}\vee t^{-1/\alpha}\big)t^{-1+k\zeta} \Big( t^{\kappa/\alpha}H_t^{(\kappa)}(x,y)+ t^\delta H_t^{(0)}(x,y)\Big)\\
&\leq \frac{C_3 C_4^{k+1}}{3\Gamma(\zeta (k+1))}(t^{-1}\vee t^{-1/\alpha}\big)t^{-1+k\zeta} \Big( t^{\kappa/\alpha}H_t^{(\kappa)}(x,y)+ t^\delta H_t^{(0)}(x,y)\Big).
\end{align*}
In the same fashion, it can be shown that
\begin{align*}
\Big|\int_0^{t/2}&\int_{\Re^d}\Phi_{s}^{\star k}(x,z)(\prt_t\Phi)_{t-s}(z,y)\,dz ds
\Big|\\
&\leq \frac{C_3 C_4^{k+1}}{3\Gamma((k+1)\zeta)} (t^{-1}\vee t^{-1/\alpha}\big)t^{-1+k\zeta} \Big( t^{\kappa/\alpha}H_t^{(\kappa)}(x,y)+ t^\delta H_t^{(0)}(x,y)\Big).
\end{align*}
For the  third term we have by \eqref{Phik}
\begin{align*}
\Big| \int_{\Re^d}\Phi_{t/2}^{\star k}(x,z)&\Phi_{t/2}(z,y)\, dz\Big|
\leq \frac{ C_1C_2^k C_{\Phi,T} C_{H,T}}{\Gamma(k\zeta)} \Big(\frac{t}{2}\Big)^{-1+(k-1)\zeta} \\
&\times  \Big\{
\Big(2\Big(\frac{t}{2}\Big)^{\kappa/\alpha+\delta} + \Big(\frac{t}{2}\Big)^{2\kappa/\alpha} \Big) H_t^{(\kappa)}(x,y)+
\Big(\frac{t}{2}\Big)^{2\delta} H_t^{(0)}(x,y)\Big\}\\
&\leq \frac{3    ( T\vee 1)\normal C_1C_{\Phi,T} C_{H,T} C_2^k}{\Gamma(k\zeta)}  t^{-1+k\zeta}
  \frac{2^{1-\zeta}}{(2\epsilon)^{k\zeta}} \normal \Big\{ t^{\kappa/\alpha}H_t^{(\kappa)}(x,y)+ t^{\delta} H_t^{(0)}(x,y)\Big\}.
\end{align*}
From the inequality $u^\zeta\leq e^{ (1-\epsilon)u}$, $u>0$, $\eps\in (0,1)$, we get  the estimate
 $$
\Gamma((k+1)\zeta) = \int_0^\infty e^{-u}u^{(k+1)\zeta-1} du\leq \epsilon^{-k\zeta} \Gamma(\zeta k).
 $$
Without loss of generality we assume that   $\epsilon>1/2$; then  $(2\epsilon)^{-\zeta k}\leq 1$.
 Therefore, we  arrive at
 $$
 \Big| \int_{\Re^d}\Phi_{t/2}^{\star k}(x,z)\Phi_{t/2}(z,y)\, dz\Big|\leq
 \frac{ C_1 C_4^{k+1} }{3\Gamma(\zeta)\Gamma((k+1)\zeta)}  t^{-1+k\zeta}
\Big\{ t^{\kappa/\alpha}H_t^{(\kappa)}(x,y)+ t^{\delta} H_t^{(0)}(x,y)\Big\}.
$$

Adding the obtained  estimate,  we get \eqref{42} with $k+1$ instead of $k$.

 In addition, for fixed $y\in \Re^d$ each term in the sum has a derivative in  $t$, continuous in  $(t,x)\in (0,T]\times \Re^d$,  and by  (\ref{42}) the series for the derivative is also uniformly convergent.   Thus,  $\Psi$ has a  derivative in  $t$, which is  continuous with respect to   $(t,x)\in (0, \infty)\times \Re^d$ and  satisfies (\ref{43}).
\end{proof}

In the above proof, in the case \textbf{C}  we differentiate in $t$ the term
$$
b(\theta_t(y))
$$
in the expression for $\Phi_t^2(x,y)$. If $b$ does not belong to $C^1$, this term may not be continuously differentiable.   Nevertheless, it is possible to show that the above result extends in a certain sense to the case when  $b$ is   only \normal \, assumed to be Lipschitz continuous.

\begin{lem}\label{deriv_irr} In case \textbf{C} of Theorem \ref{t1}, the  following statements hold true.

\begin{enumerate}
 \item   There exists a set $\Upsilon\subset (0, \infty)\times \Re^d$ of zero Lebesgue measure such that the functions  $\Phi_t^{\star k}(x,y)$, $k\geq 1$, and $\Psi_t(x,y)$ are differentiable in $t$ for    every      $x\in \Re^d$ and $(t,y)\not \in \Upsilon$.
   \item For    every      $(t,y)\not\in \Upsilon$, the  time derivatives $\prt_t\Phi_t^{\star k}(x,y), k\geq 1$, and $\prt_t\Psi_t(x,y)$ are continuous in  $x\in \rd$ and satisfy  (\ref{42}), (\ref{43}).
\end{enumerate}
\end{lem}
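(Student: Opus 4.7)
The plan is to mimic the argument of Lemma~\ref{deriv_Phi}, compensating the lack of $C^1$ regularity of $b$ by an a.e.\ argument based on the Lipschitz regularity of the flow and one-dimensional Lebesgue differentiation.

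First I would identify the exceptional set directly in terms of the only problematic object, namely the time-dependence of $b(\theta_t(y))$ in $\Phi^2_t(x,y)$ of~\eqref{Phi_c}. Since $b$ is bounded, for each fixed $y$ the flow $\tau\mapsto \theta_\tau(y)$ is Lipschitz in $\tau$, hence $\tau\mapsto b(\theta_\tau(y))$ is Lipschitz and therefore differentiable at almost every $\tau$. Define
\be\label{Upsilon_def}
\Upsilon:=\Big\{(t,y)\in (0,\infty)\times \Re^d\,:\, \tau\mapsto b(\theta_\tau(y))\ \text{is not differentiable at}\ \tau=t\Big\}.
\ee
For each $y$ the slice $\Upsilon_y$ is one-dimensional Lebesgue-null, so Fubini gives that $\Upsilon$ is two-dimensional Lebesgue-null. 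For $(t,y)\notin \Upsilon$, direct differentiation of~\eqref{Phi_c} in $t$ is legitimate at every $x\in\Re^d$; the bounds and the continuity of $\partial_t\Phi_t(x,y)$ in $x$ claimed by~\eqref{42} with $k=1$ then follow verbatim from Lemma~\ref{deriv_Phi}, since those upper estimates invoke only $\|b\|_\infty$ and $\mathrm{Lip}(b)$ (which control $|\partial_t b(\theta_t(y))|$ wherever the derivative exists) and not any pointwise $C^1$ regularity.

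Next I would proceed by induction on $k$. Assuming the claim for $\Phi^{\star k}$, split $\Phi^{\star(k+1)}_t(x,y)$ according to~\eqref{44} into $I_1=\int_0^{t/2}\int \Phi^{\star k}_{t-s}(x,z)\Phi_s(z,y)\,dzds$ and $I_2=\int_0^{t/2}\int \Phi^{\star k}_s(x,z)\Phi_{t-s}(z,y)\,dzds$. For $I_1$ the inductive derivative $(\partial_t\Phi^{\star k})_{t-s}(x,z)$ exists at every $(s,z)$ with $(t-s,z)\notin \Upsilon$; by Fubini applied to $\Upsilon$ this excludes only a Lebesgue-null subset of $(s,z)\in(0,t/2)\times\Re^d$, and the inductive estimate~\eqref{42} furnishes an integrable majorant, so dominated convergence justifies differentiation under the integral sign and produces the first summand of~\eqref{45} together with the boundary contribution from the variable upper limit $t/2$. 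For $I_2$ the required derivative $(\partial_t\Phi)_{t-s}(z,y)$ exists whenever $(t-s,y)\notin \Upsilon$; the crucial point is that $\Upsilon_y$ is Lebesgue-null for \emph{every} $y$, so the bad set of $s\in(0,t/2)$ is null as soon as $(t,y)\notin\Upsilon$, and DCT again applies. The two half-boundary terms from $I_1$ and $I_2$ combine into the single residual integral in~\eqref{45}, the estimate~\eqref{42} with $k+1$ follows by the iterative computation of Lemma~\ref{l1} (constants controlled via the sub-convolution property of $H_t(x,y)$), and continuity in $x$ of $\partial_t\Phi^{\star(k+1)}_t(\cdot,y)$ is inherited from the inductive continuity of $(\partial_t\Phi^{\star k})_\tau(\cdot,z)$ and $\Phi^{\star k}_s(\cdot,z)$ by DCT with the same majorants.

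Finally I would sum the series $\partial_t\Psi_t(x,y)=\sum_{k\geq 1}\partial_t\Phi^{\star k}_t(x,y)$: by~\eqref{42} the series converges uniformly in $x\in\Re^d$ on a neighbourhood of any $(t,y)\notin\Upsilon$, so $\partial_t\Psi_t(x,y)$ exists at such points, is continuous in $x$, and satisfies~\eqref{43}. The main obstacle throughout is a bookkeeping one: a \emph{single} null set $\Upsilon$ must serve for \emph{every} $k$, and for both differentiability and bounds. This forces one to introduce $\Upsilon$ intrinsically in terms of the flow and $b$, as in~\eqref{Upsilon_def}, rather than as the (naturally $k$-dependent) non-differentiability set of each $\Phi^{\star k}$; the Fubini-slicing argument in the inductive step works precisely because $\Upsilon_y$ is null for every $y$, a property slightly stronger than $|\Upsilon|=0$ and supplied for free by the a.e.\ differentiability of the Lipschitz curve $\tau\mapsto b(\theta_\tau(y))$.
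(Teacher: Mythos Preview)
Your proof is correct and follows the same inductive scheme as the paper (splitting via~\eqref{44}, differentiating via~\eqref{45}, and iterating the bounds through the sub-convolution property), but your construction of the exceptional set $\Upsilon$ is genuinely different and somewhat cleaner. The paper applies Rademacher's theorem in $\Re^d$ to $b$ itself, obtains a null set $\Upsilon_b\subset\Re^d$, pulls it back through the diffeomorphism $\theta_t$ (citing Coddington--Levinson for this), and then has to enlarge the resulting $\Upsilon^1$ by a second piece $(0,\infty)\times\{y:\int_0^\infty 1_{y\in\Upsilon_{s,b}}\,ds>0\}$ precisely in order to force the ``every $y$-slice is null'' property needed in the $I_2$ step. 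You obtain that property for free by working one-dimensionally: for each fixed $y$ the curve $\tau\mapsto b(\theta_\tau(y))$ is Lipschitz, hence a.e.\ differentiable, and the resulting $\Upsilon$ already has null $y$-slices for \emph{every} $y$. This avoids the diffeomorphism argument, the chain-rule identity $\partial_t b(\theta_t(y))=-\sum_j \partial_j b(\theta_t(y))b_j(\theta_t(y))$, and the two-piece definition; your direct bound $|\partial_t b(\theta_t(y))|\leq \mathrm{Lip}(b)\,\|b\|_\infty$ is all that is needed to feed into the estimates of Lemma~\ref{deriv_Phi}.

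One minor point: in the final step for $\Psi$, uniform convergence of $\sum_k\partial_t\Phi^{\star k}_t(x,y)$ in $x$ alone does not by itself justify term-by-term differentiation in $t$; what is really used (both here and in the paper, which is equally terse) is that each $t\mapsto\Phi^{\star k}_t(x,y)$ is locally Lipschitz in $t$ with constant controlled by~\eqref{42}, so the difference quotients inherit a summable dominant and DCT applies to the series. This is routine to supply and does not affect the correctness of your outline.
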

\begin{proof} Recall that by the  Rademacher theorem (cf. \cite[Thm.~VII.23.2]{B02}) the Lipschitz continuous function $b$  has a   gradient    a.e. with respect to   the Lebesgue measure on $\Re^d$. Denote by $\Upsilon_b$ the exceptional set of zero Lebesgue measure, such that $b$ is differentiable at every point outside  $\Upsilon_b$. Since $\theta_t$ is a diffeomorphism of $\Re^d$ (see Theorem~I.2.3 and  the comment in Chapter I $\S$5 from \cite{CL55}), the set $\Upsilon_{t,b}=\{y:\theta_t(y)\in \Upsilon_b\}$ is again  of zero Lebesgue measure. Since  $\prt_t\theta_t(y)=-b(\theta_t(y))$, the derivative $\prt_tb(\theta_t(y))$ is well defined for    every     $y\in \Upsilon_{t,b}$. This derivative is given by
$$
\prt_tb(\theta_t(y))=-\sum_{j=1}^d\prt_{j}b(\theta_t(y))b_j(\theta_t(y)),
$$
where  the partial derivatives $\prt_jb$ are now well defined on $\Upsilon_b$ and bounded, because $b$ is Lipschitz continuous. The term $b(\theta_t(y))$ comes in the expression for $\Phi$ in a multiplicative way, and  all other terms have  derivatives in $t$, and are  continuous in  $(t,x,y)$.  Hence, repeating the calculations from the proof of Lemma \ref{deriv_Phi}, we   get the (part of) required statements for $\Phi$,  with the exceptional set
$$
\Upsilon^1=\{(t,y):y\in \Upsilon_{t,b}\}.
$$
Further, it is easy to get   by induction the same statements for $\Phi^{\star k}, k\geq 2$, with the exceptional set
$$
\Upsilon=\Upsilon^1\bigcup\left\{(0,\infty)\times \{y:\int_0^\infty1_{y\in \Upsilon_{s,b}}\,ds>0\}\right\}.
$$
Indeed,  by \eqref{44}
\begin{align*}
{\Phi^{\star (k+1)}_{t+\triangle t}(x,y)-\Phi^{\star (k+1)}_t(x,y)\over \triangle t}&=
\int_0^{t/2} \int_\rd \frac{\Phi_{t+\triangle t-s}^{\star k}(x,z)-\Phi_{t-s}^{\star k}(x,z)}{\triangle t} \Phi_s(z,y)dzds\\
&+ \frac{1}{\triangle t} \int_{t/2}^{(t+\triangle t)/2} \int_\rd \Phi_{t+\triangle t-s}^{\star k}(x,z) \Phi_s(z,y)dzds\\
&+\int_0^{t/2} \int_\rd \Phi_s^{\star k} (x,z)\frac{\Phi_{t+\triangle t-s}(z,y)-\Phi_{t-s}(z,y)}{\triangle t} dzds\\
&+ \frac{1}{\triangle t} \int_{t/2}^{(t+\triangle t)/2} \int_\rd  \Phi_s^{\star k}(x,z) \Phi_{t+\triangle t-s}(z,y)dzds.
\end{align*}
Observe that if $(t,y)\notin \Upsilon$ then the respective ratios under the first and the third integrals converge $ds$-a.e. to the derivatives
$(\prt_t\Phi^{\star k})_{t-s}(x,z)$ and $\prt_t\Phi_{t-s}(z,y)$, and the functions $\Phi_{t+\triangle t-s}^{\star k}(x,z)$ and  $\Phi_{t+\triangle t-s}(z,y) $ converge, respectively, to $\Phi_{t-s}^{\star k}(x,z)$ and  $\Phi_{t-s}(z,y) $. Then the convergence of the integrals follows by  dominated convergence theorem and  estimates  (\ref{42}) and (\ref{Phik}). Hence, the derivative $\prt_t\Phi^{\star (k+1)}_t(x,y)$ exists and admits representation (\ref{45}). The bound (\ref{42}) for it follows by induction. Its continuity in $x$ also follows by induction and the dominated convergence theorem.

Similarly, one can obtain the required statement for $\Psi$. Recall that $\Psi_t(x,y)$ is  given by   the (uniformly convergent) series, and for each term both its differentiability in $t$ and the bound (\ref{42})  are proved for $(t, y)\not \in \Upsilon$.  Then by the dominated convergence theorem we get the same properties for the whole sum. To get the continuity with respect to  $x$,  we again use  the dominated convergence theorem.
\end{proof}

 The estimates on the derivatives we just obtained allow us to verify easily assertions \eqref{dPte}, \eqref{dpte}, which play the crucial role in the proof of Theorem~\ref{t3} and Proposition~\ref{p1}.

\begin{lem}\label{dptx}

\begin{enumerate}

  \item  For any  $f\in C_\infty(\rd)$,
$$
\|\prt_tP_{t,\eps} f-\prt_t P_t f\|_\infty \to0,  \quad \epsilon\to 0,
 $$
uniformly on compact subsets of $(0,\infty)$. Moreover, $\prt_tP_tf(x)=\int_\rd \prt_t p_{t}(x,y)f(y)dy$.

 \item  Under the assumptions of Proposition~\ref{p1},
 $$
 \partial_t p_{t,\epsilon}(x,y)\to \partial_t p_t(x,y)\quad \text{as}\quad \epsilon\to 0,
 $$
   uniformly on compact subsets of $(0,\infty)\times \rd\times \rd$;

\end{enumerate}

\end{lem}

\begin{proof}
The proofs of both statements  rely on  decomposition (\ref{46}).
 We prove the first statement; the proof of the second statement is completely similar.

 Using (\ref{46}) we have
\be\label{47}\ba
\prt_t\int_\rd  p_{t,\eps}(x,y)f(y)dy&=\int_\rd  \prt_tp_{t,\eps}(x,y)f(y)dy=\int_\rd \prt_t p_{t+\eps}^0(x,y)f(y)\, dy\\&+\int_0^{t/2}\int_{\Re^d}\int_{\Re^d}(\prt_t p^0)_{t-s+\eps}(x,z)\Psi_s(z,y)f(y)\,dzdyds
\\&+\int_0^{t/2}\int_{\Re^d}\int_{\Re^d} p^0_{s+\eps}(x,z)(\prt_t\Psi)_{t-s}(z,y)f(y)\,dzdyds\\&+\int_{\Re^d}\int_{\Re^d} p^0_{t/2+\eps}(x,z)\Psi_{t/2}(z,y)f(y)\,dzdy.
\ea
\ee
Note that for every positive $t_0<t_1$
$$
p^0_{t+\eps}(x,y)\to p^0_{t}(x,y),\quad \prt_tp^0_{t+\eps}(x,y)\to \prt_tp^0_{t}(x,y), \quad \eps\to 0,
$$
uniformly in $t\in [t_0, t_1], x,y\in \rd$. Now the required convergence follows from \eqref{47} and the bounds for $p^0, \Psi, \prt_t p^0, \prt_t\Psi$ obtained above.
\end{proof}

\subsection{Completion of the proof of Theorem \ref{t5}}

Now we can finalize the proof of Theorem~\ref{t5}. Again, we consider only   the     most cumbersome Case \textbf{C} with $b$ being  Lipschitz continuous.

By  representation (\ref{46}), the first two statements of the theorem follow from the  statements given above on  time derivatives of $p_t^0(x,y)$ and $\Psi_t(x,y)$; the proofs are completely analogous to those of Lemma \ref{deriv_irr}, and therefore are omitted. To prove statement 3, note that the set $\Upsilon$ constructed in Lemma \ref{deriv_irr} is such that for every fixed $t>0$ the set $\{y: (t,y)\in \Upsilon\}$ has zero Lebesgue measure. Together with the bounds for $\prt_tp_t(x,y)$ from statement 2, this makes it possible to use the  dominated convergence theorem  and prove that for  given $t>0$ and $f\in C_\infty(\Re^d)$
$$
{P_{t+\triangle t} f(x)-P_tf(x)\over \triangle t}\to \int_{\Re^d}\prt_tp_t(x,y)f(y)\, dy, \quad \triangle t\to 0,
$$
uniformly in $x\in \Re^d$, which gives statement 3.\qed

\begin{rem}\label{r_Kol} In the above proof of Theorem~\ref{t5}, which is based on (\ref{46}) and the subsequent parametrix-type iteration of convolutions, we are strongly motivated by the idea used in the proof of Theorem 3.1 in \cite{Ko00}. According to this idea,  we decompose the $\star$-convolution  in two parts in such a way, that after such a decomposition the time derivative is applied  to the ``least singular'' function under the integral, as it was done in  \eqref{46} -- \eqref{45}.  Unfortunately, we cannot proceed in the same way with the derivative $\prt_x$ unless  $p_t^0(x,y)$ depends on $t$ and $x-y$ only.  It seems that in this place in \cite{Ko00} there is a mistake hidden in the calculations, because in this part of the proof   the respective (space) convolutions are treated as if they only depend on the difference  of space  arguments, but in fact their structure is more complicated.   Therefore we do not use the above argument from \cite{Ko00} for the derivative $\prt_x$, and develop another way to justify the whole method.

\end{rem}

\section{Proof of Theorem~\ref{t4}}\label{s7}

 Since the function $V_t(x)$ (cf. \eqref{r_bound2}) is bounded, the upper bound in \eqref{uplo} follows just by the definition of $p_t^{(0)}(x,y)$ and \eqref{sol}.

 Let us prove the lower bound. First we observe that if we manage to prove the lower bound  for \emph{some} $T>0$, then we actually can do that for \emph{all} $T>0$. This follows directly from   the super-convolution property of the kernel $H^{(0)}_t(x,y)$ at the right hand side of \eqref{uplo} and the Chapmen-Kolmogorov identity (that is, the convolution identity) for $p_t(x,y)$ at the left hand side.

Note that for $|\omega(t,y)-x|\leq t^{1/\alpha}$  we have by \eqref{r1}
\begin{equation}\label{r5}
|r_t(x,y)|\leq C \big( t^{\kappa/\alpha} + t^\delta \big) H^{(0)}_t(x,y).
\end{equation}
Therefore,  by \eqref{sol} and  \eqref{r5} we get
\begin{equation}\label{low5}
p_t(x,y)\asymp t^{-d/\alpha}, \quad t\in (0,T], \quad |\omega(t,y)-x|\leq t^{1/\alpha}.
\end{equation}
Further, by \eqref{r_bound} and  \eqref{r_bound2} there exists $\rho\in (0,1)$ small, such that  if  $t^{1/\alpha}\leq  |\omega(t,y)-x|\leq \rho$ then
\begin{equation}\label{r10}
|r_t(x,y)|\leq 2^{-1} p_t^{(0)}(x,y).
\end{equation}
This implies
\begin{equation}\label{low10}
p_t(x,y)\geq 2^{-1} p_t^{(0)}(x,y), \quad t\in (0,T], \quad t^{1/\alpha}\leq |\omega(t,y)-x|\leq \rho.
\end{equation}
Let us show that there exists $c>0$ such that
\begin{equation}\label{low20}
p_t(x,y)\geq \frac{ct}{|\omega(t,y)-x|^{d+\alpha}}, \quad t\in (0,T], \quad |\omega(t,y)-x|> \rho.
\end{equation}
 Consider the set
$$
D=\Big\{(s,z): |\omega(t-s,y)-z|<t^{1/\alpha}\Big\}\subset [0,\infty)\times\rd,
$$
and denote
$$
\tau=\inf\{s: (s,X_s)\in D\}.
$$
If $\tau\leq t/2$, then we have $|\omega(t-\tau,y)-X_\tau|\leq  t^{1/\alpha}$, $t-\tau>t/2$, hence  by the strong Markov property and \eqref{low5} we have
$$
p_t(x,y)\geq \Ee_x\left[p_{t-\tau}(X_\tau, y) 1_{\tau\leq t/2}\right]\geq ct^{-d/\alpha}\P_x(\tau\leq t/2).
$$
To estimate $\P_x(\tau\leq t/2)$, we introduce another stopping time $\sigma$ in the following way. Up to now, $T>0$ was fixed but arbitrary. Now we take another $T_1>0$ small enough, so that
$$
t^{1/\alpha}\leq{\rho\over 3}, \quad |\omega(t-s,y)-\omega(t,y)|<{\rho\over 3}, \quad 0\leq s\leq t\leq T_1, \quad y\in \Re^d.
$$
Here in the second inequality we have used that
\be\label{om_der}
\prt_t\omega(t,y)=\begin{cases}
        0,&  \text{in case \textbf{A}},\\
        -b(y), & \text{in case \textbf{B}},\\
        - b(\theta_{t}(y)), & \text{in case \textbf{C}},
        \end{cases}
\ee
and thus $\prt_t\omega(t,y)$ is bounded. Define
$$
\sigma=\inf\left\{s:|X_s-x|\geq {\rho\over 3}\right\}\wedge \left({t\over 2}\right),
$$
then $\sigma\leq t/2$, and if $t\leq T_1$ for every $s<\sigma$ we have
\be\label{calc}
|\omega(t,y)-x|\leq|\omega(t-s,y)-X_s|+ |X_s-x|+|\omega(t,y)-  \omega(t-s,y)\normal|< |\omega(  t-s\normal,y)-X_s|+{2\rho\over 3}.
\ee
   Since    $|\omega(t,y)-x|>\rho,$ we have $|\omega(t  -s\normal,y)-X_s|>\rho/3>t^{1/\alpha}$, i.e. $(s, X_s)\not \in D$. Hence
\be\label{incl}
\{\tau\leq t/2\}\supset\{(\sigma,X_\sigma)\in D\}.
\ee
  Take $f\in C_\infty^2(\rd)$ such that $f(z)\in [0,1]$,
  $$
  f(z)=\left\{
        \begin{array}{ll}
          1,  & |z|\leq 2^{-1}t^{-1/\alpha} \\
          0, & |z|>t^{-1/\alpha},
        \end{array}
      \right.
  $$
  and for a fixed $t\leq T_1, y\in \Re^d$ consider the function $F(s,z)=f(\omega(t,y)-z)$. Then by the It\^o formula and Doob's optional sampling theorem applied to the bounded stopping time $\sigma$, we have
  $$
  \E_x F(\sigma, X_\sigma)=F(0,x)+\E_x\int_0^\sigma \Big(L_xF(s, X_s)+F'_s(s, X_s)\Big)\, ds,
  $$
  where
        \begin{equation*}
        F'_s(s,X_s)=
                \Big(\nabla f(\omega(t-s,y)),\prt_s \omega(t-s,y)\Big),
        \end{equation*}
  see \eqref{om_der} for the formula for $\prt_t \omega(t,y)$. Now we recall that
  \begin{itemize}
    \item[(i)] $F\leq 1,$ and $F(\sigma, X_\sigma)=0$ if $(\sigma, X_\sigma)\not \in D$;
    \item[(ii)] for every $s<\sigma$, $|X_\sigma-x|<\rho/3,$ and therefore by the calculation \eqref{calc} we have $|\omega(t-s,y)-X_s|>t^{1/\alpha}$, which yields that
        $$
        F(s,X_s)=0, \quad \nabla_x F(s,X_s)=0,\quad F'_s(s,X_s)=0.
        $$
  \end{itemize}
Hence by \eqref{incl} we have
$$
\ba
\P_x(\tau\leq t/2)&\geq \E_x\int_0^\sigma a(X_s)L_x^{(\alpha)}F(s,X_s)\, ds
\\&= \Ee_x \left[ \int_0^\sigma a(X_s) \int_{|\omega(t-s,y)-(X_s+u)|\leq t^{1/\alpha}} \frac{c_\alpha }{|u|^{d+\alpha}}\,duds\right]
\\&\geq c  \Ee_x\int_0^\sigma \frac{t^{d/\alpha}}{|\omega(t-s,y)-X_s|^{d+\alpha}}\, ds.
\ea
$$
Observe that by \eqref{calc} $|\omega(t-s,y)-X_s|\geq 3^{-1}|\omega(t,y)-x|,$ hence
$$
\P_x(\tau\leq t/2)\geq \frac{Ct^{d/\alpha+1}}{|\omega(t,y)-x|^{d+\alpha}}\P_x\left(\sigma>{t\over 4}\right).
$$
It is easy to verify that by choosing $T_1$ small enough we can ensure that $\P_x\left(\sigma>{t/4}\right)>1/2$ for $t<T_1, x\in \Re^d$. Summarizing all the calculations above we get the required bound \eqref{low20}.

\qed

\section*{Appendix A: Proof of Proposition~\ref{A2}}\label{sB}

 Estimate \eqref{g_a} for the  $\alpha$-stable transition probability density is well known, see, for example,  \cite{PT69}, \cite{St10a}--\cite{St11}, \cite{W07}; see also  \cite{Zo86} for the asymptotic behaviour of an $\alpha$-stable distribution density in the one-dimensional case.

   Inequality   \eqref{g_a_der}  was proved in \cite[Lem.5]{BJ07}.  The proof therein is based on the  subordination argument, i.e.  on the representation of $Z^{(\alpha)}$ as a Brownian motion with a time change performed by an independent one-sided $\alpha/2$--stable process. The same approach can be applied to the proof of \eqref{g_a_der2}; since the proof  follows  literally  the proof of  \eqref{g_a_der} in  \cite[Lem.5]{BJ07}, we omit the details.

Let us show \eqref{g_a_frac} and \eqref{g_a_frac_der}.
Recall that   $g_t^{(\alpha)}(y-x)=\frac{1}{t^{d/\alpha}} g^{(\alpha)}\big(\frac{y-x}{t^{1/\alpha}}\big)$ is the  transition probability density of $Z^{(\alpha)}$, and therefore
\begin{equation}\label{La}
L^{(\alpha)} g_t^{(\alpha)} (x)= \prt_t g_t^{(\alpha)}(x)= -\frac{d}{\alpha t^{d/\alpha+1}} g^{(\alpha)}\Big( \frac{x}{t^{1/\alpha}}\Big) - \frac{1}{\alpha t^{(d+1)/\alpha+1}}\left(x,  \nabla g^{(\alpha)}\Big( \frac{x}{t^{1/\alpha}}\Big)\right).
\end{equation}
Now  \eqref{g_a_frac} follows from \eqref{g_a}, \eqref{g_a_der} and  \eqref{La} with $t=1$.  Differentiating  \eqref{La} in $x$, taking $t=1$,  and  applying \eqref{g_a_der} and \eqref{g_a_der2}, we get \eqref{g_a_frac_der}. \normal
\qed

\section*{Appendix B: Proof of Propositions~\ref{H1} and   \ref{H-bound-a} }

\begin{proof}[Proof of Proposition~\ref{H1}] We prove the sub-convolution property, only: the proof of the super-convolution   property    is completely analogous and is omitted.

  Recall that $H_t^{(\lambda)}(x,y)$ is  defined in \eqref{Ht21}, where $\omega(t,y)$ is given in \eqref{abc} for each of the cases  \textbf{A} -- \textbf{C}. In what follows, we fix $\lambda\in [0,\alpha)$, and omit it in the notation, i.e. write $H_t(x,y)$ instead of $H^{(\lambda)}_t(x,y)$. We keep the same notation   $H_t(x,y)$ for each of the cases \textbf{A} -- \textbf{C}, but have in mind, that it is defined according to \eqref{abc}.

     Define
\begin{equation}\label{Kt}
K_t(x):=\left(\left( \big|\frac{x}{t^{1/\alpha}}\big|^\lambda \vee 1 \right) \wedge t^{-\lambda/\alpha}\right) \frac{1}{t^{d/\alpha}}  G^{(\alpha)} \left({x\over t^{1/\alpha}}\right).
\end{equation}
\normal

\emph{Case \textbf{A}. }  Note that in case \textbf{A} the  kernel $H_t(x,y)$ depends on the difference $y-x$, only,  which immediately gives \eqref{Hint}.
 Let us show the sub-convolution property.

Note that
\begin{equation}\label{Hb10}
K_t(x)\leq \frac{1}{t^{d/\alpha}} G^{(\alpha-\lambda)}\left(\frac{x}{t^{1/\alpha}}\right),
\end{equation}
and
\begin{equation}\label{Hb20}
K_t(x)= \frac{1}{t^{d/\alpha}} G^{(\alpha-\lambda)}\left(\frac{x}{t^{1/\alpha}}\right) \quad \text{if} \quad |x|\leq 1.
\end{equation}
On the other hand, by \eqref{g_a} we have      $\frac{1}{t^{d/\alpha}} G^{(\alpha-\lambda)}\left(\frac{x}{t^{1/\alpha}}\right)\asymp g^{(\alpha-\lambda)}_{t^{1-\lambda/\alpha}}(x)$\normal. Since the function $g_t^{(\alpha -\lambda\normal)}(y-x)$ is the transition probability density of an $(\alpha-\lambda)$ --stable process $Z^{(\alpha-\lambda)}$,  it possesses the convolution property;   see Remark~\ref{rem_conv}\normal.
%\begin{align*}
% \int_\rd g^{(\alpha-\lambda)}_{t-s}(z) g^{(\alpha-\lambda)}_{s}(x-z)dz&=
 %g^{(\alpha-\lambda)}_{t}(x).
%\end{align*}
 Therefore, if $|x|\leq 1$,  we have
 $$
 (K_{t-s}* K_s)(x)\leq C K_{(t-s)^{1-\lambda/\alpha} +s^{1-\lambda/\alpha}}(x)
 $$
 with the constant $C>0$ depending  on $\alpha, \lambda$, and $d$ only. Observe that
 $$
t^{1-\lambda/\alpha}\leq (t-s)^{1-\lambda/\alpha} +s^{1-\lambda/\alpha}\leq 2t^{1-\lambda/\alpha}, \quad 0\leq s\leq t.
 $$
 Thus, it follows from the   explicit representation for $G^{(\alpha-\lambda)}(x)$, (\ref{G1}) and \eqref{Hb20},   that
 $$
 \big( K_{t-s}* K_s\big)(x)\leq C K_t(x), \quad |x|\leq 1.
 $$
 Consider now the case  $|x|>1$.   Split
 $$
\big( K_{t-s}* K_s\big)(x) \leq    \Big(\int_{|z|\geq |x|/2} + \int_{|x-z|\geq |x|/2}\Big)K_{t-s}(z)K_s(x-z)dz\normal.
$$
Note that $K_t(x)$ is a monotone function of $|x|$. In addition, it depends on $|x|$ in a piece-wise power-type way, and therefore possesses the
same property formulated in statement 1 of Proposition \ref{A1} for the function $G^{(\lambda)}$. Then
$$
K_{t-s}(z) \leq K_{t-s}(x/2)\leq c K_{t-s}(x), \quad  |z|\geq |x|/2.
$$
For $|x|\geq 1$ we have $K_t(x)= t^{1-\lambda/\alpha} |x|^{-d-\alpha}$, and thus
$$
K_{t-s}(x)=(t-s)^{1-\lambda/\alpha} |x|^{-d-\alpha}
\leq (t-s)^{1-\lambda/\alpha} t^{-1+\lambda/\alpha} K_t(x)\leq  K_t(x),  \quad |x|\geq 1.
$$
Then for $|x|\geq 1$
$$\ba
\int_{|z|\geq |x|/2} K_{t-s}(z)K_s(y-z)\,dz&\leq c K_t(x) \int_{|z|\geq |x|/2}K_s(y-z)\,dz
\\&\leq c K_t(x) \int_{\Re^d}K_s(z')\,dz'\leq C K_t(x),
\ea
$$
where in the last inequality we used \eqref{Hb10} and    \eqref{Psi1}\normal. Similarly,
$$
\int_{|y-z|\geq |x|/2} K_{t-s}(z)K_s(y-z)dz\leq C K_t(x), \quad |x|\geq 1.
$$
Summarizing the estimates proved above, we derive the required sub-convolution property for  $H_t(x,y)$.

\medskip

\emph{Case \textbf{B}. }  Denote for $q\in[0,1]$
$$
K_t^{(q)}(x,y)= K_t\Big(y-qb(x)t-(1-q)  b(y) t-x\Big),
$$
  where $K_t(x)$ is defined in \eqref{Kt}\normal. Observe that now $H_t(x,y)= K_t^{(0)}(x,y)$.

Let us prove the following statement: For a given $T>0$ there exist $c, C$ such that for every $q\in [0,1]$
\begin{equation}\label{key}
c K^{(q)}_t (x,y) \leq K_t(y-tb(y)-x)\leq C K^{(q)}_t (x,y), \quad t\in (0, T].
\end{equation}
We prove only  the first inequality,  the proof of the second  one is completely analogous. Consider two cases: $|x-y|>2Bt$ and $|x-y|\leq 2Bt$, where $B=\sup_x|b(x)|$. In the first case, we have
\begin{equation}\label{xy}
|y-x-b(y)t|\geq {1\over 2}|y-x|, \quad |y-x-qb(x)t-(1-q)b(y)t|\leq {3\over 2}|y-x|.
\end{equation}
Then by the analogue of (\ref{G1}) for $K_t(x)$ we get the first inequality in \eqref{key}.

Consider the case $|x-y|\leq 2Bt$. Then we have
\begin{equation}\label{bxy}
|b(x)-b(y)|\leq c t^\gamma.
\end{equation}
Since  in  case \textbf{B} we  have $1+\gamma>1/\alpha$, by the triangle inequality   we get
\begin{align*}
\Big|\frac{y-z-tb(y)}{t^{1/\alpha}}\Big|^\lambda \vee 1 &\leq C\Big(\Big|\frac{y-z-tb(y) -qt(b(x)-b(y))}{t^{1/\alpha}}\Big|^\lambda \vee 1\Big) + C\\
&\leq C \Big( \Big|\frac{y-z-tb(y) -qt(b(x)-b(y))}{t^{1/\alpha}}\Big|^\lambda \vee 1\Big),\quad t\leq T.
\end{align*}
Note that  for any $C\geq 1, A\geq 0$
$$
(CA)\vee t^{-1/\alpha}\leq C(A\vee t^{-1/\alpha}),
$$
hence  we can finalize the above estimate in the following way:
$$
\frac{K^{(0)}_t(x,y)}{K_t^{(q)}(x,y)} \leq C
{g^{(\alpha-\lambda)}(v)\over g^{(\alpha-\lambda)}(u)},
$$
where
$$
u={y-x-b(y)t\over t^{1/\alpha}}, \quad v={y-x-qb(x)t-(1-q)b(y)t\over t^{1/\alpha}}.
$$
Note that the logarithmic derivative of $g^{(\alpha-\lambda)}(x)$ is bounded; see (\ref{g_a}), (\ref{g_a_der}).
Then
\begin{equation}\label{gl}
{g^{(\alpha-\lambda)}(v)\over g^{(\alpha-\lambda)}(u)}\leq e^{c|u-v|}, \quad u,v\in \Re^d,
\end{equation}
which implies
\begin{equation}\label{k-es1}
\frac{K_t^{(0)}(x,y)}{K_t^{(q)}(x,y)} \leq   C\exp\left[c q|b(x)-b(y)|t^{-1/\alpha+1}\right].
\end{equation}
Since  for  $|x-y|\leq 2Bt$ we have \eqref{bxy},
we can estimate the right-hand side of \eqref{k-es1}  by
$C \exp \left[  ct^{-1/\alpha+1+ \gamma} \right]$,
which is bounded for $t\in [0,T]$  since   $\alpha>(1+\gamma)^{-1}$. This completes the proof of (\ref{key}).

Now we can finalize the proof in  case \textbf{B}.  By \eqref{key} with $q=1$,
\begin{equation}\label{rhs}\ba
\int_\rd H_{t-s}(x,z)& H_s(z,y)\,dz=\int_\rd K_{t-s}^{(0)} (x,z) K_s^{(0)} (z,y)\,dz
\\&\leq C \int_\rd K_{t-s}^{(1)} (x,z) K_s^{(0)} (z,y)\,dz=C \int_\rd K_{t-s}(z-x') K_s (y'-z)\,dz,
\ea
\end{equation}
where
 $$
x'=x+(t-s)b(x),\quad  y'=y-sb(y).
 $$
 The  sub-convolution property of the kernel $K_t(y-x)$ was actually shown in the proof of case \textbf{A}, hence
 \begin{equation}\label{hrs2}
 \int_\rd H_{t-s}(x,z) H_s(z,y)\,dz\leq C K_t(y'-x')=CK^{(1-s/t)}_t(x,y).
 \end{equation}
Applying \eqref{key} with $q=1-s/t$, we complete the proof of the required sub-convolution property for $H_t(x,y)$ in case \textbf{B}. Finally, applying \eqref{key} with $q=1$ we get estimates \eqref{Hint}.

\medskip
\emph{Case } \textbf{C}. The scheme of the proof in this case is similar to that one in the case \textbf{B}. Denote  for  $q\in [0,1]$
$$
\tilde  K_t^{(q)}(x,y)=K_t\Big(\chi_{qt}(\theta_t(y))- \chi_{qt}(x)\Big).
$$
Observe that $\tilde  K_t^{(0)}(x,y)\equiv K_t (\theta_t(y)-x)$    is equal to the kernel $H_t(x,y)$    in the case \textbf{C}.
As in the case \textbf{B}, let us show that
\begin{equation}\label{tilko}
c \tilde{K}^{(q)}(x,y)\leq K_t (\theta_t(y)-x) \leq C \tilde{K}^{(q)}(x,y), \quad t\in (0,T].
\end{equation}
Suppose first that $b\in C_b^1(\Re^d)$. In this case, every
$\chi_t(x)$ is differentiable in $x$, and the respective derivative $D_t(x):=\nabla_x \chi_t(x)$ satisfies the following linear ODE (cf. \cite[Ch.~I, (7.12)--(7.14)]{CL55})
$$
{d\over dt}D_t(x)=B(t,x)D_t(x), \quad B(t,x):=\big(\nabla b\big)(\chi_t(x)).
$$
In addition, $D_0(x)$ is the identity matrix.  Similar relations hold true for the inverse flow, since $\theta_t$ is the solution to \eqref{flow2},  which differs from \eqref{flow1} by the sign "-". Then $\nabla_x \theta_t(x)= D^{-1}_t(x)$, where $D_t^{-1}(x)$ is the inverse matrix of $D_t(x)$, and
$$
{d\over dt}D_t^{-1}(x)=\tilde{B}(t,x)D_t^{-1}(x), \quad \tilde{B}(t,x):=-\big(\nabla b\big)(\theta_t(x)).
$$
 Hence,  we have the following bounds for the matrix norms of $D_t(x)$  and $D_t^{-1}(x)$:
\begin{equation}\label{der}
\|D_t(x)\| \leq C_{b,T}, \quad \left\|\Big(D_t(x)\Big)^{-1}\right\| \leq C_{b,T}, \quad t\in (0,T].
\end{equation}
Note that the constant $C_{b,T}$ depends only on $T$ and on the supremum of the matrix norm of $\nabla b$.
Using these inequalities, we derive
$$
 C_{b,T}^{-1}|\theta_t(y)-x| \leq |\chi_{qt}(\theta_t(y))-\chi_{qt}(x)|\leq  C_{b,T}|\theta_t(y)-x|, \quad q\in [0,1],\, t\in (0,T].
$$
 Then since $\theta_t(y)-x = \big(\chi_{qt}(\theta_t(y))-\chi_{qt}(x)\big)\big|_{q=0}$, we derive  \eqref{tilko} by the property of $K_t$ (cf. the explanation in the case \textbf{A}). Note that
 $$
 \theta_s(y)-\chi_{t-s}(x)=\chi_{t-s}\big(\theta_t(y)\big)-\chi_{t-s}(x)= \chi_{(1-s/t)t}\big(\theta_t(y)\big)-\chi_{(1-s/t)t}(x).
 $$
%$$
%c K_t\big(\chi_{qt}(\theta_t(y))-\chi_{qt}(x)\big)\leq K_t\big(\theta_t(y)-x\big)\leq C K_t\big(\chi_{qt}(\theta_t(y))-\chi_{qt}(x))\big), \quad t\in (0,T].
% $$
Then we derive \eqref{rhs}  and \eqref{hrs2}  with $\tilde{K}_{t-s}^{(q)}(\cdot,\cdot)$ instead of $K_t^{(q)}(\cdot,\cdot)$, with $q=0,1$ and $1-s/t$, respectively, and
$$
x'= \chi_{t-s}(x), \quad y'= \theta_s(y).
$$
Then
applying finally \eqref{tilko} with $q=1-s/t$ we derive the sub-convolution property of $H_t(x,y)$.   Applying \eqref{tilko} with $q=1$ we get \eqref{Hint}, which finalized the proof of the proposition in case \textbf{C} if  $b\in C^1(\rd)$.

To handle the Lipschitz case   one can approximate $b$ uniformly by a sequence of functions
$b_n\in C^1_b(\Re^n)$ in such a way that the matrix norms of $\nabla b_n$ remain uniformly bounded.
\end{proof}

\begin{proof}[Proof of Proposition~\ref{H-bound-a}]  a) Without loss of generality assume that $f\in C_\infty(\rd)$ is non-negative.
Then in case \textbf{A} we have
$$
\int_\rd g^{(\alpha)}_t(y-x)f(y)dy=\int_\rd g^{(\alpha)}_t(z)f(z+x)dz\to 0, \quad |x|\to \infty.   $$
In case \textbf{B}  we have by \eqref{key}
\begin{equation}\label{ca2}
\begin{split}
\int_\rd g_t^{(\alpha)}(y-tb(y)-x)f(y)dy
&\leq  C \int_\rd g_t^{(\alpha)}(y-tb(x)-x) f(y) dy\\
&= C \int_\rd g_t^{(\alpha)}(z) f(z+x+tb(x)) dz\to 0, \quad |x|\to \infty,
\end{split}
\end{equation}
   since      $b(\cdot)$ is bounded, and $f\in C_\infty(\rd)$.

Analogously, in case \textbf{C} we have
\begin{align*}
\int_\rd g_t^{(\alpha)}(\theta_t(y)-x)f(y) dy & \leq C \int_\rd g_t^{(\alpha)}(y-\chi_t(x))f(y)dy\\
&= C \int_\rd g_t^{(\alpha)}(z)f(z+\chi_t(x))dz \to 0, \quad |x|\to \infty,
\end{align*}
because $|\chi_t(x)|=\big|x+\int_0^t b(\chi_s(x))ds\big|\to \infty$, $|x|\to\infty$,    since the function $b(\cdot)$ is bounded.

b)
In case \textbf{A} the statement follows from  the fact that $g_t^{(\alpha)}(y-x)$ is the fundamental solution to the Cauchy problem for $\partial_t-L^{(\alpha)}$, in particular,
\begin{equation}\label{fa}
\sup_{x\in \rd} \left|\int_\rd g_t^{(\alpha)}(y-x)f(y)dy-f(x)\right|\to 0 \quad \text{as}\quad t\to 0.
 \end{equation}
  In case \textbf{B} we have
\begin{equation}\label{ca3}
\begin{split}
\Big|\int_\rd g_t^{(\alpha)}&(y-tb(y)-x)f(y)dy- f(x)\Big|
\\
&\leq C\int_\rd \left|g_t^{(\alpha)}(y-tb(y)-x)-g_t^{(\alpha)}(y-tb(x)-x)\right| dy\\
 &\quad +\Big| \int_\rd g_t^{(\alpha)}(y-x-tb(x))f(y)dy- f(x)\Big|=: J_1(t,x)+J_2(t,x).   \end{split}
\end{equation}
Note that by \eqref{g_a_der}
\begin{align*}
\Big|g_t^{(\alpha)}(y-tb(y)-x)&-g_t^{(\alpha)}(y-tb(x)-x)\Big|\\
&\leq  t
\int_0^1 \left|\frac{(b(y)-b(x))}{t^{1/\alpha}} \frac{1}{t^{d/\alpha}} \Big( \nabla g^{(\alpha)}\Big) \left(\frac{y-x-tb(x)-st(b(y)-b(x))}{t^{1/\alpha}}\right)\right| ds\\
&\leq C t \int_0^1
\frac{|y-x|^\gamma}{t^{1/\alpha}} \frac{1}{t^{d/\alpha}} G^{(\alpha+1)} \left(\frac{y-x-tb(x)-st(b(y)-b(x))}{t^{1/\alpha}}\right) ds\\
&\leq C t
\frac{|y-x|^\gamma}{t^{1/\alpha}} \frac{1}{t^{d/\alpha}} G^{(\alpha+1)} \left(\frac{y-x-tb(x)}{t^{1/\alpha}}\right).
\end{align*}
where in the last line we used that the estimate  \eqref{key} also holds true for $\frac{1}{t^{d/\alpha}}G^{(\alpha+1)}\Big(\frac{\cdot}{t^{1/\alpha}}\Big) $ instead of $K_t(\cdot)$.  Using the triangle inequality,  \eqref{G2} and \eqref{g_a}, we derive
$$
\Big|g_t^{(\alpha)}(y-tb(y)-x)-g_t^{(\alpha)}(y-tb(x)-x)\Big|\leq C \big(t^{1+\gamma -1/\alpha}+ t^{1-1/\alpha+\gamma/\alpha}\big) g_t^{(\alpha)}(y-x-tb(x)).
$$
Since in case \textbf{B} we assumed that $\alpha>(1+\gamma)^{-1}$, we have $\alpha >1-\gamma$. Thus,
$$
\sup_x J_1(t,x)\leq C \big(t^{1+\gamma -1/\alpha}+ t^{1-1/\alpha+\gamma/\alpha}\big) \to 0, \quad t\to 0.
$$

For $J_2(t,x) $ we have under the additional assumption that $f\in C^1_\infty(\rd)$
\begin{align*}
J_2(t,x)=\left|\int_\rd g_t^{(\alpha)}(z) \big(f(z+x+tb(x))-f(x)\big)dy\right| \leq C t
\to 0, \quad t\to 0,
\end{align*}
uniformly in $x$. The general case $f\in C_\infty(\rd)$ follows by the approximation argument. This  completes the proof in case \textbf{B}.

In case \textbf{C} the argument is similar. We split
\begin{equation}\label{ca4}
\begin{split}
\Big|\int_\rd g_t^{(\alpha)}&(\theta_t(y)-x)f(y)dy- f(x)\Big|
\\
&\leq C\int_\rd \left|g_t^{(\alpha)}(\theta_t(y)-x)-g_t^{(\alpha)}(y-\chi_t(x))\right| dy\\
 &\quad +\Big| \int_\rd g_t^{(\alpha)}(y-\chi_t(x))f(y)dy- f(x)\Big|=: J_1(t,x)+J_2(t,x).   \end{split}
\end{equation}
Using \eqref{g_a_der}, \eqref{g_a} and \eqref{tilko}, we get
\begin{align*}
\Big|g_t^{(\alpha)}(\theta_t(y)-x)&-g_t^{(\alpha)}(y-\chi_t(x))\Big|\\
&\leq t \int_0^1 \left|\frac{b(\chi_{qt}(\theta_t(y)))-b(\chi_{qt}(x))}{t^{1/\alpha}} \frac{1}{t^{d/\alpha}}  \Big( \nabla g^{(\alpha)}\Big) \left(\frac{\chi_{qt}(\theta_t(y))-\chi_{qt}(x)}{t^{1/\alpha}}\right)\right|\,dq\\
&\leq C t  g_t^{(\alpha)}(y-\chi_t(x)),
\end{align*}
which implies that $\sup_x J_1(t,x)\to 0$ as $t\to 0$.

For $J_2(t,x) $ we have by the same  argument as in case   \textbf{B}
\begin{align*}
J_2(t,x)=\left|\int_\rd g_t^{(\alpha)}(z) \big(f(z+\chi_t(x))-f(x)\big)dy\right|\to 0, \quad t\to 0,
\end{align*}
uniformly in $x$ . This finishes the proof in case \textbf{C}.
\end{proof}

\textbf{Acknowledgement.} We thank  thank   K. Bogdan,  A. Kochubei,    A.Kohatsu-Higa  and  R. Schilling, for inspiring discussions  and helpful remarks. We are very grateful to the anonymous referees whose comments and remarks helped us to improve the paper.
  We  gratefully acknowledge the DFG Grant Schi~419/8-1; the first-named author gratefully acknowledges  the Scholarship of the President of Ukraine for young scientists (2012-2014), and the NCN
grant 2014/14/M/ST1/00600.

   \end{document}